\title[ ]{Reconstructing a minimal topological dynamical system from a set of return times}
\author{Kamil Bulinski}
\address{School of Mathematics and Statistics, University of Sydney, Australia}
\email{kamil.bulinski@sydney.edu.au}
\author{Alexander Fish}
\address{School of Mathematics and Statistics, University of Sydney, Australia}
\email{alexander.fish@sydney.edu.au}
\begin{document}
\maketitle
\raggedbottom

%% Mathcal large
\newcommand{\cA}{\mathcal{A}}
\newcommand{\cB}{\mathcal{B}}
\newcommand{\cC}{\mathcal{C}}
\newcommand{\cD}{\mathcal{D}}
\newcommand{\cE}{\mathcal{E}}
\newcommand{\cF}{\mathcal{F}}
\newcommand{\cG}{\mathcal{G}}
\newcommand{\cH}{\mathcal{H}}
\newcommand{\cI}{\mathcal{I}}
\newcommand{\cJ}{\mathcal{J}}
\newcommand{\cK}{\mathcal{K}}
\newcommand{\cL}{\mathcal{L}}
\newcommand{\cM}{\mathcal{M}}
\newcommand{\cN}{\mathcal{N}}
\newcommand{\cO}{\mathcal{O}}
\newcommand{\cP}{\mathcal{P}}
\newcommand{\cQ}{\mathcal{Q}}
\newcommand{\cR}{\mathcal{R}}
\newcommand{\cS}{\mathcal{S}}
\newcommand{\cT}{\mathcal{T}}
\newcommand{\cU}{\mathcal{U}}
\newcommand{\cV}{\mathcal{V}}
\newcommand{\cW}{\mathcal{W}}
\newcommand{\cX}{\mathcal{X}}
\newcommand{\cY}{\mathcal{Y}}
\newcommand{\cZ}{\mathcal{Z}}
%% Mathbb large
\newcommand{\bA}{\mathbb{A}}
\newcommand{\bB}{\mathbb{B}}
\newcommand{\bC}{\mathbb{C}}
\newcommand{\bD}{\mathbb{D}}
\newcommand{\bE}{\mathbb{E}}
\newcommand{\bF}{\mathbb{F}}
\newcommand{\bG}{\mathbb{G}}
\newcommand{\bH}{\mathbb{H}}
\newcommand{\bI}{\mathbb{I}}
\newcommand{\bJ}{\mathbb{J}}
\newcommand{\bK}{\mathbb{K}}
\newcommand{\bL}{\mathbb{L}}
\newcommand{\bM}{\mathbb{M}}
\newcommand{\bN}{\mathbb{N}}
\newcommand{\bO}{\mathbb{O}}
\newcommand{\bP}{\mathbb{P}}
\newcommand{\bQ}{\mathbb{Q}}
\newcommand{\bR}{\mathbb{R}}
\newcommand{\bS}{\mathbb{S}}
\newcommand{\bT}{\mathbb{T}}
\newcommand{\bU}{\mathbb{U}}
\newcommand{\bV}{\mathbb{V}}
\newcommand{\bW}{\mathbb{W}}
\newcommand{\bX}{\mathbb{X}}
\newcommand{\bY}{\mathbb{Y}}
\newcommand{\bZ}{\mathbb{Z}}

\newcounter{dummy} \numberwithin{dummy}{section}

\theoremstyle{definition}
\newtheorem{mydef}[dummy]{Definition}
\newtheorem{prop}[dummy]{Proposition}
\newtheorem{corol}[dummy]{Corollary}
\newtheorem{thm}[dummy]{Theorem}
\newtheorem{lemma}[dummy]{Lemma}
\newtheorem{eg}[dummy]{Example}
\newtheorem{notation}[dummy]{Notation}
\newtheorem{remark}[dummy]{Remark}
\newtheorem{claim}[dummy]{Claim}
\newtheorem{Exercise}[dummy]{Exercise}
\newtheorem{question}[dummy]{Question}

\begin{abstract} We investigate to what extent a minimal topological dynamical system is uniquely determined by a set of return times to some open set. We show that in many situations this is indeed the case as long as the closure of this open set has no non-trivial translational symmetries. For instance, we show that under this assumption two Kronecker systems with the same set of return times must be isomorphic. More generally, we show that if a minimal dynamical system has a set of return times that coincides with a set of return times to some open set in a Kronecker system with translationarily asymmetric closure, then that Kronecker system must be a factor. We also study similar problems involving Nilsystems and polynomial return times. We state a number of questions on whether these results extend to other homogeneous spaces and transitive group actions, some of which are already interesting for finite groups.

\end{abstract}

\section{Introduction}

We study the question of whether knowing the exact times a point in an unknown dynamical system enters an open set is enough to determine the system. In this paper, a topological dynamical system is a pair $(X,T)$ where $X$ is a compact metric space with metric $d$ (all metrics will be refered to as $d$ even for different spaces if no ambiguity arises) and $T:X \to X$ is a homeomorphism. Recall that $(X,T)$ is $\textit{minimal}$ if for all $x_0 \in X$ the orbit $T^{\bZ}x_0 = \{T^n x_0 ~|~ n \in \bZ\}$ is dense and is \textit{transitive} if some orbit is dense. If $x_0 \in X$ and $U \subset X$ is an open set then we can define the set of return times $$\mathcal{R}_{(X,T)}(x_0, U) = \{ n \in \bZ ~|~ T^nx_0 \in U \}.$$ Thus our question may now be stated as follows: Given two minimal topological dynamical systems  $(X_1, T_1)$ and $(X_2, T_2)$ with $x_1 \in X_1, x_2 \in X_2$ and $U_1 \subset X_1, U_2 \subset X_2$ open such that $\mathcal{R}_{(X_1,T_1)}(x_1, U_1) = \mathcal{R}_{(X_2,T_2)}(x_2, U_2)$, then is it necessarily true that $(X_1, T_1)$ and $(X_2, T_2)$ are isomorphic? If so, is there an isomorphism mapping $x_1$ to $x_2$?

Recall that an isomorphism $\phi:(X_1, T_1) \to (X_2, T_2)$ is a homeomorphism $\phi:X_1 \to X_2$ such that $\phi \circ T_1 = T_2 \circ \phi$.  One can contrast this to the Taken's Reconstruction Theorem \cite{TakensOriginal} as well as recent developments \cite{GutmanTakens} where one instead is given a system $(X,T)$ where the $\textit{dimension}$ of $X$ is $d$ and asks whether the delay observation mapping $X \to [0,1]^{2d+1}$ given by $x \mapsto (h(x), h(Tx), \ldots, h(T^{2d+1}x))$ is injective for some \textit{generic} continuous $h:X \to [0,1]$. Thus our question can instead be posed as asking to what extent the mapping $$(X, T, x_0, U) \mapsto \left(\mathds{1}_U(T^n x_0) \right)_{n \in \bZ}$$ is injective up to isomorphism.

\subsection{Reconstructing Kronecker Systems}
Let us now start with some motivating examples where no such isomorphism exists.

\begin{eg} Let $X_1 = X_2 = \bT = \bR/\bZ$ and suppose $\alpha \in \bT$ is irrational. Let $$U_1=U_2 = (-\epsilon, \epsilon) \cup (-\epsilon + \frac{1}{2}, \epsilon + \frac{1}{2}) \subset \bT.$$ Then for each $n \in \bZ$ we have that $n\alpha \in U_1$ if and only if $n(\alpha + \frac{1}{2}) \in U_2$. However, the minimal systems $(\bT, x \mapsto x+\alpha)$ and $(\bT, x \mapsto x + \alpha + \frac{1}{2})$ are not isomorphic.

\end{eg}

\begin{eg}\label{eg: Torus factor} Let $X_1 = \bT$, $X_2 = \bT^2$, $\alpha_1 = \sqrt{2} \in \bT$ and $\alpha_2 = (\sqrt{2}, \sqrt{3}) \in \bT^2$. Let $U_1 \subset \bT$ be any non-empty proper open subset and let $U_2 = U_1 \times \bT$. Then clearly $n \alpha_1 \in U_1$ if and only if $n\alpha_2 \in U_2$. Yet $X_1$ and $X_2$ are not even homeomorphic.

\end{eg}

These two examples highlight that open sets with non-trivial translational symmetries are a source of issues. For a compact abelian metrizable group $(K, +)$ and $A \subset K$ we can define the stabilizer $$\operatorname{Stab}_K(A) = \{k \in K ~|~ A + k = A \}.$$ Note that in our examples above the stabilizers are not trivial; the stabilizer of $(-\epsilon, \epsilon) \cup (-\epsilon + \frac{1}{2}, \epsilon + \frac{1}{2}) $ is $\{0, \frac{1}{2}\}$ while the stabilizer of $U_1 \times \bT$, for $U_1 \subset \bT$ non-empty and proper, is the vertical subgroup $\{0\} \times \bT$.

Our first main result demonstrates that the only way two non-isomorphic Kronecker systems can yield the same set of return times is if the closure of one of the defining open sets has a non-trivial stabilizer.

\begin{thm}\label{thm: Kronecker systems isomorphic} Let $(K_1, +)$ and $(K_2, +)$ be two compact metrizable abelian groups and suppose that $\alpha_1 \in K_1$ and $\alpha_2 \in K_2$ are such that $\overline{\bZ\alpha_i} = K_i$ for $i=1,2$. Suppose that $U_1 \subset K_1$ and $U_2 \subset K_2$ are open sets such that the stabilizers of their closures are trivial, i.e., $\operatorname{Stab}_{K_i}(\overline{U_i}) = \{0\}$. Then if $$\{n \in \bZ ~|~ n\alpha_1 \in U_1\} = \{n \in \bZ ~|~ n\alpha_2 \in U_2\}$$ then there exists an isomorphism of topological groups (so continuous) $K_1 \to K_2$ mapping $\alpha_1$ to $\alpha_2$.\end{thm}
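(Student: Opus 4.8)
The plan is to pass to the canonical minimal joining of the two systems and use the principle that, inside a minimal system, a set of return times pins down the closure of the open set being visited. First I would set $K = \overline{\bZ(\alpha_1,\alpha_2)} \subseteq K_1 \times K_2$, which is a compact metrizable abelian group, and let $\pi_i \colon K \to K_i$ be the restrictions of the coordinate projections. Each $\pi_i$ is a continuous homomorphism sending the dense subset $\bZ(\alpha_1,\alpha_2)$ onto the dense subset $\bZ\alpha_i$, so (a continuous image of a compact set being closed) $\pi_i$ is surjective. The rotation $T\colon x \mapsto x+(\alpha_1,\alpha_2)$ makes $(K,T)$ minimal — every orbit closure is all of $K$ — and, writing $V_i = \pi_i^{-1}(U_i)$, one has $T^n 0 \in V_i \iff n\alpha_i \in U_i$. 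Thus the hypothesis becomes $\cR_{(K,T)}(0,V_1) = \cR_{(K,T)}(0,V_2)$.

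The next step is the elementary but crucial observation that in any minimal $(X,T)$, equality of return times $\cR_{(X,T)}(x,V) = \cR_{(X,T)}(x,W)$ for open $V,W$ forces $\overline V = \overline W$: if $v \in V$, minimality supplies $n_k$ with $T^{n_k}x \to v$, these iterates eventually lie in $V$, hence in $W$, so $v \in \overline W$, giving $V \subseteq \overline W$ and symmetrically. Applied in $K$ this yields $\overline{V_1} = \overline{V_2}$.

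Now I would push this back down to $K_1$ and $K_2$. Since $V_i$ is invariant under $\ker\pi_i$, so is $\overline{V_i}$; and since $\pi_i$ is surjective, $\pi_i(\overline{V_i}) = \overline{U_i}$ (the inclusion $\overline{\pi_i^{-1}(U_i)} \subseteq \pi_i^{-1}(\overline{U_i})$ handles one direction, compactness of $\pi_i(\overline{V_i})$ together with $U_i \subseteq \pi_i(\overline{V_i})$ the other). Given $g \in \ker\pi_2$ we then have $\overline{V_1} + g = \overline{V_2} + g = \overline{V_2} = \overline{V_1}$, and applying $\pi_1$ gives $\overline{U_1} + \pi_1(g) = \overline{U_1}$, so $\pi_1(g) \in \operatorname{Stab}_{K_1}(\overline{U_1}) = \{0\}$; hence $\ker\pi_2 \subseteq \ker\pi_1$, and by the symmetric argument $\ker\pi_1 = \ker\pi_2 =: N$. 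Finally $\pi_i$ descends to a continuous bijective homomorphism $K/N \to K_i$, which is automatically a homeomorphism ($K/N$ compact, $K_i$ Hausdorff), so composing $K_1 \xrightarrow{\ \sim\ } K/N \xrightarrow{\ \sim\ } K_2$ produces the desired isomorphism of topological groups, and it sends $\alpha_1 = \pi_1(\alpha_1,\alpha_2)$ to $\pi_2(\alpha_1,\alpha_2) = \alpha_2$.

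The conceptual heart of the argument is the passage ``equal return times $\Rightarrow$ equal closures in the joining''; once that is in hand, the hypothesis on the stabilizers does the rest almost formally. Accordingly I expect the only real friction to be the third paragraph's bookkeeping — checking that closure, the projections $\pi_i$, and invariance under the kernels all interact as claimed — rather than any single hard estimate.
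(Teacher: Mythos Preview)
Your argument is correct. Both your proof and the paper's share the same starting move---forming the orbit closure $K=\overline{\bZ(\alpha_1,\alpha_2)}$ inside $K_1\times K_2$ and studying the two projections---but from there they diverge. The paper deduces Theorem~\ref{thm: Kronecker systems isomorphic} from the more general Theorem~\ref{thm: Kronecker factor of minimal system}, whose proof establishes fibrewise coset structure and then shows directly, via a two-stage limiting argument, that the fibre group lands in $\operatorname{Stab}_K(\overline{U'})$. Your route is more streamlined for the symmetric Kronecker-vs-Kronecker situation: the single clean lemma ``equal return times in a minimal system force equal closures of the open sets'' does all the dynamical work in one shot, and the remainder is pure group theory with kernels. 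What the paper's approach buys is generality (one side can be an arbitrary minimal system, where the orbit closure is not a subgroup and the coset structure must be proved by hand); what yours buys is brevity and transparency, exploiting that here the joining is itself a compact abelian group so the fibre structure is automatic.
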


Note that if one removes a point not on the trajectory $\{n\alpha_i ~|~ n \in \bZ\}$ from $U_i$ then $U_i$ remains open and the return times and the closures do not change, so it is indeed natural to study the stabilizer of the closure $\overline{U_i}$ rather than $U_i$.

\subsection{Detecting Kronecker Factors of Minimal Systems}

We now turn our attention to minimal topological systems. As demonstrated in Example~\ref{eg: Torus factor} if a minimal system has a non-trivial factor, then it shares a set of return times with it. Of course, this is also the case if one replaces in that example the rotation $(x,y) \mapsto (x+\sqrt{2},y+\sqrt{3})$ with an ergodic skew product such as $\bT^2 \to \bT^2: (x,y) \mapsto (x+\sqrt{2}, x+y)$. Our next result shows that essentially the only way a minimal system can share a set of return times with a Kronecker system is if the latter is a factor of the former.

\begin{thm}\label{thm: Kronecker factor of minimal system} Let $(X,T)$ be a minimal topological dynamical system and $U \subset X$ be a non-empty open set and $x_0 \in X$. Let $(K, +)$ be a compact metrizable abelian group and $\alpha \in K$ such that $\overline{\bZ \alpha} = K$ and let $U' \subset K$ be a non-empty open set. Suppose that \begin{align}\label{equality of return times} \{ n \in \bZ ~|~ T^nx_0 \in U \} = \{n \in \bZ ~|~ n\alpha \in U'\} \end{align} and that $\operatorname{Stab}_K(\overline{U'}) = \{ 0 \}$. Then the pointed Kronecker system $(K,0 , k \mapsto k+\alpha)$ is a factor of the pointed system $(X, x_0, T)$, i.e., there is a continuous map $\phi: X \to K$ with $\phi(x_0) = 0$ satisfying $$\phi(Tx) = \phi(x) + \alpha \text{ for all } x \in X.$$

\end{thm}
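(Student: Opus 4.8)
The natural approach is to build the factor map $\phi: X \to K$ directly from the return-time data. The idea: we want $\phi(T^n x_0) = n\alpha$, so $\phi$ is already forced on the orbit of $x_0$; we need to show this extends continuously to all of $X$. The obstruction to continuity is controlled by how the return sets behave, so let me set it up via uniform continuity / equicontinuity.

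First I would define $\phi$ on the dense orbit $\{T^n x_0\}$ by $\phi(T^n x_0) := n\alpha$. For this to be well-defined we need: if $T^n x_0 = T^m x_0$ then $n\alpha = m\alpha$; but if the orbit is finite the system is a finite cyclic group and the argument is easy (and $K$ is then also finite cyclic), so assume the orbit is infinite, where well-definedness is automatic. The heart of the matter is a uniform continuity estimate: I claim that for every $\varepsilon>0$ there is $\delta>0$ such that $d(T^n x_0, T^m x_0)<\delta$ implies $d(n\alpha,m\alpha)<\varepsilon$, equivalently $(n-m)\alpha$ lies in a small neighbourhood of $0$. Equivalently, writing $k = n-m$, I want: the set of "good return times" $\{k : k\alpha \in V\}$ for $V$ a small neighbourhood of $0$ in $K$ contains all $k$ with $d(T^j x_0, T^{j+k}x_0)$ small for many/all $j$ — i.e., the Bohr-type structure of the orbit is detected. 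This is where the hypotheses $\operatorname{Stab}_K(\overline{U'})=\{0\}$ and minimality must be used.

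The key step, and the main obstacle, is extracting the translation structure on $K$ purely from the return set $R = \{n : T^n x_0 \in U\} = \{n : n\alpha \in U'\}$. Here is the mechanism I would use. For $t \in \bZ$, consider the "shifted" return set $R - t = \{n : (n+t)\alpha \in U'\} = \{n : n\alpha \in U' - t\alpha\}$. Minimality of $(X,T)$ and the fact that $R$ is a return set of $(X,T)$ mean that $R-t$ is a return set to the open set $T^{-t}U$ at the point $x_0$ (up to translating the base point). The crucial point: two elements $s,t\in\bZ$ satisfy $s\alpha = t\alpha$ if and only if $U' - s\alpha = U' - t\alpha$ — this is exactly where $\operatorname{Stab}_K(\overline{U'})=\{0\}$ enters (one needs to pass to closures: $U' - s\alpha = U' - t\alpha \iff \overline{U'} - s\alpha = \overline{U'} - t\alpha \iff (s-t)\alpha \in \operatorname{Stab}_K(\overline{U'}) = \{0\}$). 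So the map $t\alpha \mapsto R - t$ is well-defined and injective on $\bZ\alpha$. One then checks that $R - t$, as a subset of $\bZ$, can be recovered from the dynamics of $(X,T,x_0)$ together with the point $T^t x_0$: namely $R-t = \mathcal{R}_{(X,T)}(T^t x_0, U)$. So we get an injection $\bZ\alpha \hookrightarrow \{$subsets of $\bZ$ arising as $\mathcal{R}_{(X,T)}(y,U)$ for $y$ in the orbit of $x_0\}$, compatible with the $\bZ$-action (shift on one side, $T$ on the other). The final step is a compactness/closure argument: the closure of the orbit $\{T^t x_0\}$ is all of $X$ by minimality, the closure of $\{t\alpha\}$ is all of $K$, and the assignment $T^t x_0 \mapsto t\alpha$ is uniformly continuous for the natural (Hausdorff-type, or weak-$*$ on indicator sequences) topology on return sets — because in a Kronecker system $n\mapsto n\alpha$ being close forces the return sets $R-t$ to be "close" in density/pattern, and conversely. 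Hence it extends to a continuous surjection $\phi: X \to K$ with $\phi(x_0)=0$ and $\phi\circ T = \phi + \alpha$, which is the desired factor map.

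The main obstacle I anticipate is making the "return sets close $\iff$ points close" equivalence precise and rigorous in both directions, especially the direction needed for continuity of $\phi$ (points in $X$ close $\Rightarrow$ images in $K$ close). For the $K$-side this is just uniform continuity of rotation, but transferring it back through the identification requires knowing that the return-set data $\mathcal{R}_{(X,T)}(y,U)$ genuinely separates and metrizes the orbit closure $X$ the same way it does $K$ — and a priori $X$ could be "larger." The resolution is that one does \emph{not} need $\phi$ injective, only continuous and equivariant; so it suffices to show $y \mapsto (\text{return set of } y)$ factors through $X \to K$, i.e. that the return set of $y$ depends only on $\lim t_j \alpha$ whenever $T^{t_j}x_0 \to y$. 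For this one shows the sequence $(t_j\alpha)$ is Cauchy in $K$ whenever $(T^{t_j}x_0)$ converges in $X$, using that $T^{t_j}x_0 \to y$ implies the return sets $R - t_j$ converge (in the cylinder topology on $\{0,1\}^{\bZ}$), hence by injectivity of $t\alpha\mapsto R-t$ together with compactness of $K$, the $t_j\alpha$ converge; continuity of $\phi$ then follows from continuity of this limit operation. I would isolate this Cauchy argument as the single technical lemma carrying the whole proof.
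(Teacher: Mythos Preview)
Your overall strategy is correct and is essentially the same as the paper's: both amount to showing that the orbit closure $\Theta=\overline{\{(T^nx_0,n\alpha):n\in\bZ\}}\subset X\times K$ projects injectively to $X$, which is precisely your ``$(t_j\alpha)$ is Cauchy whenever $(T^{t_j}x_0)$ converges'' lemma. The difference is in how that lemma is proved, and your proposed mechanism has a genuine gap. You assert that $T^{t_j}x_0\to y$ forces the return sets $R-t_j=\{n:T^n(T^{t_j}x_0)\in U\}$ to converge in the cylinder topology on $\{0,1\}^{\bZ}$; but $\mathds{1}_U$ is not continuous, so if $T^ny\in\partial U$ for some $n$ you cannot conclude that the $n$-th coordinate stabilises, and nothing in the hypotheses rules out orbit points landing on $\partial U$. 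The same obstruction appears on the $K$-side: the map $\beta\mapsto\{n:n\alpha\in U'-\beta\}$ is discontinuous wherever $n\alpha+\beta\in\partial U'$, so even after passing to subsequences you cannot read off a unique limit $\beta\in K$ from a limiting return set in $\{0,1\}^{\bZ}$. In short, the symbolic coding you rely on is not a topological conjugacy, and the boundary ambiguity is exactly what needs to be controlled.

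The paper handles this by never introducing the symbolic space at all. It shows directly that the fibre $H=\{h\in K:(x_0,h)\in\Theta\}$ is a closed \emph{subgroup} of $K$ (a short argument using minimality and compactness), that every fibre $H_x=\{k:(x,k)\in\Theta\}$ is a coset of $H$, and then that $H\subset\operatorname{Stab}_K(\overline{U'})$. This last inclusion is where the return-time hypothesis is used, twice in opposite directions: starting from $u'\in U'$ one approximates by orbit points, transfers through $U$ via the equality of return sets, translates by $h\in H$ inside $\Theta$, and transfers back, landing in $\overline{U'}$ rather than $U'$ --- the passage to the closure is exactly what absorbs the boundary ambiguity that breaks your cylinder-convergence argument. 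Since the stabiliser is trivial, $H=\{0\}$, the projection $\pi_X:\Theta\to X$ is a homeomorphism, and $\phi=\pi_K\circ\pi_X^{-1}$ is the factor map. Your plan becomes correct if you replace the symbolic step by this direct ``fibre is a subgroup contained in the stabiliser'' argument.
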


Note that Theorem~\ref{thm: Kronecker factor of minimal system} immediately implies Theorem~\ref{thm: Kronecker systems isomorphic} (apply twice to both Kronecker systems to obtain two factor maps that are inverses) since an isomorphism of Kronecker systems that preserves the zero elements is necessarily a group isomorphism. The following example shows that we cannot replace the minimality assumption with transitivity.

\begin{eg}(Insufficient to assume transitivity instead of minimality) Let $X \subset [-1, 1]$ be the set $X = \{ x_n ~|~ n \in \bZ \cup \{-\infty, \infty \} \}$ where $x_{\pm \infty} = \pm 1$ and for $n\in \bZ$,  

$$x_n =  \begin{cases}
    -1 + \frac{1}{|n|} & n < 0 \\
    0 & n = 0 \\
    1 - \frac{1}{n} & n >0
  \end{cases}$$

Thus $X$ is closed and all $x_n$ with $n \in \bZ$ are isolated points. So we have a homeomorphism $T:X \to X$ mapping $x_n$ to $x_{n+1}$ and fixing $x_{\pm \infty}$. Hence it is transitive (all $x_n$ with $n \in \bZ$ have dense orbits) but not minimal. Now $U = \{x_n ~|~ n \in 2\bZ\}$ is open and the return times set $2\bZ = \{ n \in \bZ ~|~ T^n x_0 \in U\}$ is also equal to $\{n \in \bZ ~|~ n\alpha \in U'\}$ where $U' = \{0\} \subset K = \bZ/2\bZ$ is open and $\alpha = 1 \in K$. But there is no factor $\pi$ from $(X,T)$ to the Kronecker system $(K, k \mapsto k+1)$ as it would yield the contradiction $\pi(x_{\infty}) = \pi(Tx_{\infty}) = \pi(x_{\infty}) + 1$.

\end{eg}

\subsection{Return times along a polynomial sequence}

We recall the following result which follows immediately from the Polynomial Weyl Equidistribution \cite{WeylPolynomial} and the fact that connected compact abelian groups have no non-trivial characters with finite image.

\begin{prop} Let $K$ be a \textbf{connected} compact abelian group and suppose that $P(x) \in \bZ[x]$ is a non-constant polynomial. If $\alpha \in K$ is such that $\overline{\bZ \alpha} = K$ then $P(\bZ)\alpha = \{P(n)\alpha ~|~ n \in \bZ\}$ is dense in $K$.  

\end{prop}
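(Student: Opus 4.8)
The plan is to deduce this from Weyl's criterion for equidistribution in a compact abelian group together with the quoted polynomial equidistribution theorem. First I would recall that a sequence $(y_n)_{n \geq 1}$ in a compact metrizable abelian group $K$ is equidistributed with respect to the Haar probability measure $m_K$ if and only if for every non-trivial character $\chi \in \widehat{K}$ one has $\frac{1}{N}\sum_{n=1}^{N} \chi(y_n) \to 0$ as $N \to \infty$; and since $m_K$ has full support, equidistribution of $(y_n)$ forces $\overline{\{y_n ~|~ n \geq 1\}} = K$ (an open ball missing every $y_n$ would have positive Haar measure yet empty intersection with the sequence). So it suffices to show that the character sums associated to $(P(n)\alpha)_{n \geq 1}$ vanish.

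Next, fix a non-trivial $\chi \in \widehat{K}$ and set $\beta = \chi(\alpha) \in \bT$. Since $K$ is connected, $\chi$ has infinite image, and as $\chi(K)$ is a compact subgroup of $\bT$ — hence either finite or all of $\bT$ — we conclude $\chi(K) = \bT$. By continuity of $\chi$ we have $\chi(\overline{\bZ\alpha}) \subseteq \overline{\chi(\bZ\alpha)} = \overline{\bZ\beta}$, and the hypothesis $\overline{\bZ\alpha} = K$ then gives $\bT = \chi(K) \subseteq \overline{\bZ\beta}$, so $\overline{\bZ\beta} = \bT$ and therefore $\beta$ is irrational.

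Now write $P(x) = a_d x^d + \cdots + a_1 x + a_0$ with $d \geq 1$ and $a_d \in \bZ \setminus \{0\}$. Then $\chi(P(n)\alpha) = P(n)\beta = (a_d\beta)n^d + \cdots + (a_1\beta)n + a_0\beta$ has leading coefficient $a_d \beta$, which is irrational since $a_d$ is a nonzero integer and $\beta$ is irrational. By the polynomial Weyl equidistribution theorem, $(P(n)\beta)_{n \geq 1}$ is equidistributed modulo $1$, hence $\frac{1}{N}\sum_{n=1}^{N} e^{2\pi i P(n)\beta} \to 0$, i.e. $\frac{1}{N}\sum_{n=1}^{N} \chi(P(n)\alpha) \to 0$. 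As $\chi$ was an arbitrary non-trivial character, $(P(n)\alpha)_{n \geq 1}$ is equidistributed in $K$, hence dense, and a fortiori $\{P(n)\alpha ~|~ n \in \bZ\}$ is dense in $K$.

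There is no serious obstacle: the argument is a routine application of Weyl's criterion. The only points needing a little care are upgrading a non-trivial character of a connected group to a surjection onto $\bT$ (so that $\chi(\alpha)$ is genuinely irrational, not merely nonzero — this is where connectedness is essential), and the bookkeeping that transfers the equidistribution question along $\chi$ to the single-variable polynomial $P(n)\beta$ on $\bT$, where the classical theorem directly applies.
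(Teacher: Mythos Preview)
Your proof is correct and follows exactly the approach the paper indicates: the paper does not give a detailed argument but simply states that the proposition follows from polynomial Weyl equidistribution together with the fact that connected compact abelian groups have no non-trivial characters with finite image, and your proof is precisely the standard unpacking of that remark via Weyl's criterion.
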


Given this, it is natural to ask whether we can extend Theorem~\ref{thm: Kronecker systems isomorphic} to the case where we only know the set of return times inside a polynomial sequence, i.e., the set $\{ n ~|~ P(n)\alpha \in U\}$. The next result shows that this is indeed the case.

\begin{thm}\label{thm: polynomial return times} Let $K_1$ and $K_2$ be compact connected abelian groups and suppose that $\alpha_i \in K_i$ with $\overline{\bZ\alpha_i} = K_i$. Let $P(x) \in \bZ[x]$ be a polynomial with $P(0) = 0$. Let $U_i \subset K_i$ be non-empty open sets and let $\mathcal{R}_i = \{n \in \bZ ~|~ P(n)\alpha_i \in U_i\}$. If $\mathcal{R}_1 = \mathcal{R}_2$ and $\operatorname{Stab}_{K_1}(\overline{U_1}) = \{0 \}$ and $\operatorname{Stab}_{K_2}(\overline{U_2}) = \{0 \}$ then there exists an isomorphism $K_1 \to K_2$ (of topological groups, so continuous) mapping $P(n)\alpha_1$ to $P(n)\alpha_2$ for each $n \in \bZ$. In particular, if $P(\bZ)$ is not contained in any proper subgroup of $\bZ$ then this isomorphism maps $\alpha_1$ to $\alpha_2$. \end{thm}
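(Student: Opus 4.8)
The plan is to reduce Theorem~\ref{thm: polynomial return times} to Theorem~\ref{thm: Kronecker factor of minimal system} by constructing, from the polynomial return data, a genuine minimal system whose ordinary (degree-one) return times match those appearing in the hypothesis. Consider the product system $(K_1 \times K_2, S)$ where $S(x,y) = (x+\alpha_1, y+\alpha_2)$, and let $Z = \overline{\{ S^n(0,0) ~|~ n \in \bZ\}}$ be the orbit closure of $(0,0)$. Then $(Z,S)$ is a minimal system (it is a subgroup of $K_1 \times K_2$, in fact a Kronecker system generated by $(\alpha_1,\alpha_2)$), and the two coordinate projections $\pi_i : Z \to K_i$ are surjective factor maps sending $(0,0)$ to $0$. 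The point is that $\mathcal{R}_1 = \mathcal{R}_2$ forces a strong rigidity of $Z$: I will show $Z$ must be the graph of an isomorphism $K_1 \to K_2$.

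First I would observe that the polynomial Weyl equidistribution (Proposition stated above, applied to $Z$, or directly to $K_1 \times K_2$ after passing to the orbit closure of $\{P(n)(\alpha_1,\alpha_2)\}$) shows that $\{P(n)(\alpha_1,\alpha_2) ~|~ n \in \bZ\}$ is dense in a certain closed subgroup $W \le K_1 \times K_2$; by $P(0)=0$ and connectedness one checks $W = \overline{\langle (\alpha_1,\alpha_2)\rangle_P \rangle}$ and that each projection $W \to K_i$ is onto (since $\{P(n)\alpha_i\}$ is dense in $K_i$ by the Proposition, as $K_i$ is connected). Now the set $V = W \cap (U_1 \times U_2)$ is open in $W$, and for $w = P(n)(\alpha_1,\alpha_2)$ we have that $P(n)(\alpha_1,\alpha_2) \in U_1 \times K_2$ iff $n \in \mathcal{R}_1$ iff $n \in \mathcal{R}_2$ iff $P(n)(\alpha_1,\alpha_2) \in K_1 \times U_2$. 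I want to upgrade this ``iff along the polynomial orbit'' to a statement about the closures: the sets $\overline{U_1} \times K_2$ and $K_1 \times \overline{U_2}$, intersected with $W$, should agree, \emph{provided} I first replace $U_i$ by the interior of its closure to avoid the pathology that the $U_i$ need not be topologically regular — this is where I would use density of the polynomial orbit in $W$ to argue that the symmetric difference of these closed sets, being closed with empty intersection with a dense set, must be nowhere dense, and then leverage the stabilizer hypotheses.

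The key structural step: let $W_0 = \operatorname{Stab}_W(V')$ where $V'$ is the appropriate regularized version of $W \cap (U_1\times U_2)$. Using that $\mathcal{R}_1 = \mathcal{R}_2$ and density, one deduces that the $K_1$-direction and $K_2$-direction ``slices'' of $\overline{V'}$ coincide, which combined with $\operatorname{Stab}_{K_i}(\overline{U_i}) = \{0\}$ forces $\operatorname{Stab}_W(\overline{V'})$ to meet $\ker \pi_1$ and $\ker \pi_2$ only trivially — but any element of $\ker\pi_1 \cap W$ stabilizes $\overline{U_1}\times K_2 \supseteq$ (roughly) $\overline{V'}$, hence lies in $\operatorname{Stab}_W(\overline{V'})$, and projecting to $K_2$ lands in $\operatorname{Stab}_{K_2}(\overline{U_2}) = \{0\}$; so $\ker\pi_1 \cap W = \{0\}$ and symmetrically $\ker \pi_2 \cap W = \{0\}$. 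Thus both projections $\pi_i : W \to K_i$ are continuous group isomorphisms, and $\psi = \pi_2 \circ \pi_1^{-1} : K_1 \to K_2$ is an isomorphism of topological groups. Since $(\alpha_1,\alpha_2) \in W$ and more generally $(P(n)\alpha_1, P(n)\alpha_2) \in W$ for all $n$, we get $\psi(P(n)\alpha_1) = P(n)\alpha_2$ as required; and if $P(\bZ)$ generates $\bZ$ then $\alpha_1 = \sum_j c_j P(n_j)\alpha_1$ for integers $c_j$ with $\sum c_j P(n_j) = 1$, so $\psi(\alpha_1) = \alpha_2$.

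The main obstacle I anticipate is making the passage from ``the polynomial orbit sees the same return times'' to ``the relevant closed subsets of $W$ literally coincide'' fully rigorous: the open sets $U_i$ are arbitrary, so one must carefully pass to regularizations (interiors of closures) and check that doing so changes neither $\mathcal{R}_i$ nor the stabilizers of the closures, and then argue that two closed subsets of $W$ that agree on a dense subset and are each ``slice-saturated'' in the appropriate coordinate direction must be equal up to a nowhere-dense error that the stabilizer-triviality then kills. An alternative, possibly cleaner route that avoids some of this is to bypass $W$ and instead invoke Theorem~\ref{thm: Kronecker factor of minimal system} directly with $(X,T) = (K_1, k \mapsto k + \alpha_1)$... but this does not obviously work because the return times in the hypothesis are along $P(n)\alpha_1$, not $n\alpha_1$, so the orbit-closure/product construction above really does seem to be the right framework, and the equidistribution input is essential to it.
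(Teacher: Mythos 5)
Your overall framework — take the product group, look at the closure $W$ of the polynomial orbit $\{P(n)(\alpha_1,\alpha_2)\}$, and show the coordinate projections restricted to $W$ are injective and hence isomorphisms — is indeed the same architecture as the paper's proof. But there is a genuine gap at the very first structural step, and it is not a technicality.

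You assert that $\{P(n)(\alpha_1,\alpha_2)\}$ is dense in a \emph{closed subgroup} $W \le K_1 \times K_2$. This is not a consequence of the stated Proposition on polynomial Weyl equidistribution: that Proposition requires the ambient group to be \emph{connected} and $\alpha$ to topologically generate it. Here the natural ambient object is $Z = \overline{\bZ(\alpha_1,\alpha_2)}$, which is a closed subgroup of $K_1 \times K_2$ but is in general \emph{disconnected} even when $K_1$ and $K_2$ are connected (e.g.\ $K_1=K_2=\bT$, $\alpha_1=\sqrt 2$, $\alpha_2=\sqrt 2+\tfrac15$, giving $|Z/Z_0|=5$). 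In that situation $\overline{P(\bZ)(\alpha_1,\alpha_2)}$ is a union of cosets of $Z_0$ indexed by the image of $P$ modulo $|Z/Z_0|$, and this is typically not a subgroup: for $P(n)=n^2$ the residues mod $5$ are $\{0,1,4\}$, which is not closed under addition. So $W$ is not a subgroup, and the subsequent group-theoretic manipulations (e.g.\ "$\ker\pi_1\cap W$", "$\pi_i:W\to K_i$ are isomorphisms") do not parse as stated. The paper circumvents this by proving (Proposition~\ref{prop: polynomial orbit closure in compact Lie group}) that the polynomial orbit closure is a finite union of cosets of a genuine closed connected subgroup, then proving injectivity of the projection \emph{on that whole union}, and only afterwards concluding that the union collapses to a single coset (the subgroup itself) via connectedness of $K_1$. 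Your proposal needs this extra layer.

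A second, smaller gap: your passage from "same return times along the polynomial orbit" to "$\ker\pi_1\cap W$ maps into $\operatorname{Stab}_{K_2}(\overline{U_2})$" is only gestured at, and the one concrete step you give ("an element of $\ker\pi_1\cap W$ stabilizes $\overline{U_1}\times K_2 \supseteq \overline{V'}$, hence lies in $\operatorname{Stab}_W(\overline{V'})$") is not valid — stabilizing a superset does not imply stabilizing the subset. The actual content here is a careful density/approximation argument (as in the paper's Proposition~\ref{prop: infinite dimensional kernel contained in Stabilizer}), and you have not supplied it; the "regularization" detour you propose is not needed in the paper and does not obviously repair the step. Finally, the paper has to treat the non-Lie case by an approximation-by-Lie-quotients argument (the $\phi_D$, $\Gamma_D$, $\Gamma$ machinery); your proposal does not engage with why the Lie-group version of the orbit-closure structure theorem extends to general compact connected abelian groups, which is a nontrivial portion of the proof.
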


\begin{eg} Consider the polynomial $P(n) = n^5-n$ and let $\alpha_1 = \sqrt{2} \in \bT$ and $\alpha_2 = \sqrt{2} + \frac{1}{5} \in \bT$. Then for any open set $U \subset \bT$ we have that $\{n \in \bZ ~|~ P(n)\alpha_1 \in U\} =  \{n \in \bZ ~|~ P(n)\alpha_2 \in U\} $ and indeed there is an isomorphism (the identity map) mapping $P(n)\alpha_1 $ to $P(n)\alpha_2$ as they are equal, but there is no isomorphism mapping $\alpha_1$ to $\alpha_2$. 

\end{eg}

We remark that it seems that there is no obvious way to extend Theorem~\ref{thm: Kronecker factor of minimal system} to polynomial times under the assumption that $X$ is connected since an example of Pavlov \cite{PavlovCounterexamples} shows that there exists a minimal connected system $(X,T)$ such that $\{T^{n^2} x_0 ~|~ n \in \bZ\}$ is not dense for some $x_0 \in X$.

\subsection{Return times sets of nilmanifolds}

It is natural to try to extend Theorem~\ref{thm: Kronecker systems isomorphic} on the return times in Kronecker systems to other homogeneous spaces such as nilmanifolds. The following is a partial result on this which establishes that if a Kronecker system $(K,+)$ and a minimal nilsystem $(G/\Gamma, T)$ share the same return times, with the closures of the defining open sets having trivial stabilizers under translations of $K$ and $G$ respectively, then they must be isomorphic as dynamical systems.

\begin{thm}\label{thm: nilsystem and kronecker} Let $X=G/\Gamma$ be a nilmanifold, where $G$ is a nilpotent Lie group and $\Gamma \leq G$ is a cocompact discrete subgroup. Suppose that $\tau \in G$ is such that $(X,T)$ is minimal, where $T:X \to X$ is given by $Tx = \tau x$ for $x \in X$. Suppose that $U \subset X$ is open and satisfies that $\operatorname{Stab}_G(\overline{U}):= \{g \in G ~|~ g\overline{U} = \overline{U}\}$ consists of only those $g \in G$ such that $gx = x$ for all $x \in X$. Now suppose that $(K,+)$ is a compact abelian group and $\alpha \in K$ is such that $\overline{\bZ \alpha} = K$ and $V \subset K$ is open such that $\operatorname{Stab}_K(\overline{V}) = \{0\}$ and $$\{ n \in \bZ ~|~ T^nx_0 \in U\} = \{n \in \bZ ~|~ n\alpha \in V\}$$ for some $x_0 \in X$. Then the pointed system $(X, x_0, T)$ is isomorphic to the pointed Kronecker system $(K,0 , k \mapsto k+\alpha)$, i.e., there is a homeomorphism $\phi: X \to K$ with $\phi(x_0) = 0$ satisfying $$\phi(Tx) = \alpha + \phi(x) \text{ for all } x \in X.$$

\end{thm}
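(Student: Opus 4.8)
The plan is to reduce Theorem~\ref{thm: nilsystem and kronecker} to Theorem~\ref{thm: Kronecker factor of minimal system}. First I would apply Theorem~\ref{thm: Kronecker factor of minimal system} directly: since $(X,T)$ is minimal and the return times of $x_0$ into $U$ coincide with those of $0$ into $V$ in the Kronecker system $(K, k \mapsto k+\alpha)$ with $\operatorname{Stab}_K(\overline V) = \{0\}$, we obtain a factor map $\phi : (X, x_0, T) \to (K, 0, k\mapsto k+\alpha)$, continuous with $\phi(x_0)=0$ and $\phi(Tx) = \phi(x)+\alpha$. The real work is to promote this factor map to an isomorphism, i.e.\ to show $\phi$ is injective (it is automatically surjective, being a factor map onto a minimal system, and a continuous bijection between compact Hausdorff spaces is a homeomorphism). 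Equivalently, I must rule out the possibility that the nilsystem $(X,T)$ is a \emph{strictly larger} extension of the Kronecker system $K$.

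The key idea for injectivity is to run the argument the other way, using the symmetry of the hypothesis together with the stabilizer condition on $\overline U \subset X$. Since $K$ is a (compact, monothetic, hence abelian) topological group and $\phi$ is a factor map, $K$ is a quotient of the maximal equicontinuous / Kronecker factor of $(X,T)$; but a minimal nilsystem $(G/\Gamma, \tau)$ has the feature that its maximal equicontinuous factor is itself a Kronecker nilsystem of the form $G/[G,G]\Gamma$ with the induced rotation, and more importantly that $(X,T)$ is a \emph{distal} system which is an iterated isometric extension of this factor. I would use this structure as follows: let $\pi: X \to X_{eq}$ be the maximal equicontinuous factor and note $\phi$ factors through $\pi$, say $\phi = \psi \circ \pi$ with $\psi : X_{eq} \to K$ a factor map of Kronecker systems. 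The set $W = \psi^{-1}(V) \subset X_{eq}$ is open, its return times agree with those of $U$, and $\pi^{-1}(\overline W) \supseteq \overline U$; I would argue that the stabilizer hypothesis on $\overline U$ forces $\pi^{-1}(\overline W)$ to essentially be $\overline U$ and more to the point forces $\pi$ to be injective, because any nontrivial fibre of $\pi$ would, by the isometric-extension structure of the nilsystem, produce a nontrivial group of symmetries of $\overline U$ coming from the structure group of the extension (acting along fibres, hence by elements of $G$ fixing the relevant sets), contradicting $\operatorname{Stab}_G(\overline U) = \{g : gx = x \ \forall x\}$. Once $\pi$ is injective, $X = X_{eq}$ is itself Kronecker, and then applying Theorem~\ref{thm: Kronecker factor of minimal system} in the reverse direction (with the roles of the two systems swapped, using $\operatorname{Stab}_X(\overline U)=\{0\}$ in the relevant quotient) yields a factor map $K \to X$ inverse to $\phi$, completing the proof.

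I expect the main obstacle to be making precise the claim that a nontrivial fibre of the maximal equicontinuous factor map of the nilsystem produces a genuine translational symmetry of $\overline U$ by an element of $G$. This requires understanding how the structure group of the isometric extension $X \to X_{eq}$ sits inside $G$ (or acts on $X$ compatibly with the $\bZ$-action) and transferring "two points in the same fibre" into "an element $g \in G$ with $g\overline U = \overline U$ but $g$ not acting trivially on all of $X$." One clean way to handle this: since $X = G/\Gamma$ with $G$ nilpotent, consider the closure $H$ of $\langle \tau\rangle$ in $G$ and use that $X$ is a homogeneous space of $H$ on which $H$ acts minimally; the equicontinuous factor corresponds to $H/[H,H]\overline{(H \cap \Gamma)}$ or similar, and elements of $[H,H]$ act along equicontinuous-factor fibres. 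An alternative, perhaps more robust, route that avoids dissecting the group structure: use that $\phi$ is a factor onto the Kronecker system and the equality of return times to deduce $\overline U \subseteq \phi^{-1}(\overline V)$ and, via minimality and the asymmetry of $\overline V$, that $\overline U = \phi^{-1}(\overline V)$; then show directly that if $\phi(x) = \phi(y)$ then the closure of $\{T^n x : n\} $ and a translate related to $y$ force an element of $\operatorname{Stab}_G(\overline U)$ moving $x$ to $y$. Either way, the topological-dynamics input (distality and the isometric tower structure of nilsystems, e.g.\ via Host--Kra or Leibman) is what carries the argument, and pinning down the precise form of that input is where the care is needed.
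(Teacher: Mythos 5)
Your first step (apply Theorem~\ref{thm: Kronecker factor of minimal system} to get the factor map $\phi:X\to K$, then try to show $\phi$ is injective) is exactly what the paper does, but from there the two arguments diverge. The paper constructs the inverse $\psi:K\to X$ directly: it forms the product nilsystem on $Z=K\times X$, takes $\Theta=\overline{S^{\bZ}(0,x_0)}$, invokes the structure theorem for orbit closures in nilsystems (Theorem 9, Chapter 11 of Host--Kra \cite{NilpotentErgodic}) to write $\Theta=Hz_0$ for a closed subgroup $H\leq K\oplus G$, and shows $\pi_K|_\Theta$ is injective by proving, via the equality of return times, that if $(k,g_1x_0),(k,g_2x_0)\in\Theta$ then $g_2g_1^{-1}\in\operatorname{Stab}_G(\overline U)$. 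Your ``second route'' gestures at this same mechanism (producing an element of $\operatorname{Stab}_G(\overline U)$ from two points with equal $\phi$-image), but it is left unclear where such a $g\in G$ comes from without the orbit-closure-is-a-group-orbit fact; that is the nontrivial nilsystem input, not something you can elide.

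Your ``first route'' via the maximal equicontinuous factor is a genuinely different and, with care, also workable argument, but as written it contains a real gap where you flag it. Here is how to close it. Since $U$ and $\phi^{-1}(V)$ are open and agree on the dense orbit $T^{\bZ}x_0$ (that is precisely what the equality of return times says, using $\phi(T^nx_0)=n\alpha$), and since for an open set $W$ and dense set $D$ one has $\overline W=\overline{W\cap D}$, you get $\overline U=\overline{\phi^{-1}(V)}$ with no appeal to the stabilizer hypothesis. Because $\phi=\psi\circ\pi$ with $\pi:G/\Gamma\to G/[G,G]\Gamma$ the natural (open) quotient of homogeneous spaces and $\psi$ a continuous surjective homomorphism of compact abelian groups (hence open), $\phi$ is open, so $\overline{\phi^{-1}(V)}=\phi^{-1}(\overline V)$; thus $\overline U$ is saturated by the fibres of $\pi$. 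Those fibres are precisely the orbits of $[G,G]$ acting by left translation, so $[G,G]\subseteq\operatorname{Stab}_G(\overline U)$, and the hypothesis then forces $[G,G]$ to act trivially on $X$. That makes $X$ a compact abelian group and $(X,T)$ Kronecker, after which Theorem~\ref{thm: Kronecker systems isomorphic} (not merely Theorem~\ref{thm: Kronecker factor of minimal system}) gives the isomorphism with $K$ sending $x_0$ to $0$. Note that the stabilizer hypothesis is used only at the $[G,G]$-acts-trivially step, not, as you suggest, to force $\pi^{-1}(\overline W)=\overline U$; that equality comes from minimality and openness alone. Two facts you would need to cite to make this rigorous: that $\phi$ factors through $G/[G,G]\Gamma$ (equivalently, that $G/[G,G]\Gamma$ with the induced rotation is the maximal equicontinuous factor of a minimal nilsystem), and that the corresponding quotient maps are open. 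So the idea is right and the route is legitimately different from the paper's, trading the orbit-closure theorem for the description of the Kronecker factor of a nilsystem; but in its current form the proposal is an outline with the decisive step (turning ``same fibre'' into ``element of $\operatorname{Stab}_G(\overline U)$'') not actually carried out.
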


Note that Theorem~\ref{thm: Kronecker factor of minimal system} shows that such a continuous map $\phi$ exists but does not show that it is a homeomorphism (and indeed it may not be without the assumption on $\operatorname{Stab}_G(\overline{U})$). Thus it suffices to show the existence of an inverse to $\phi$, which will be established in Section~\ref{sec: Nilsystems}.

It is interesting to ask whether the same result holds more generally for two nilsystems rather than a nilsystem and a Kronecker system. Furthermore, one can extend this question to more general homogeneous spaces.

\subsection{An explicit reconstruction for Jordan Measurable sets} One way of stating Theorem~\ref{thm: Kronecker factor of minimal system} is that one can uniquely reconstruct a Kronecker system $(K, x \mapsto x+\alpha)$, where $K$ is a compact group with $\overline{\bZ\alpha} = K$, from the set $\mathcal{R} = \{n ~|~ n\alpha \in U\}$ provided that $U$ is some (unknown) open set with $\operatorname{Stab}_K(\overline{U})$ is trivial. If we further assume that $U$ is Jordan measurable (recall that this means that $m_K(\overline{U}) = m_K(U)$ where $m_K$ is the Haar measure) then the following result shows that there is in fact a rather explicit reconstruction of $(K, \alpha)$ from $\mathcal{R}$. Let $\bU = \{ z \in \bC ~|~ |z|=1\}$ denote the unit complex numbers.

\begin{thm}\label{thm: explicit reconstruction} Let $K$ be a compact abelian group and let $\alpha \in K$ be such that $\overline{\bZ\alpha} = K$ and suppose $U \subset K$ is a Jordan measurable open set such that $\operatorname{Stab}_K(\overline{U})$ is trivial. Let $\mathcal{R} = \{n ~|~ n\alpha \in U\}$ and let $$ \Lambda = 
\left\{ \lambda \in \bU ~|~ \frac{1}{N}\sum_{n=1}^N \lambda^n \mathds{1}_{\mathcal{R}}(n) \text{ does not converge to } 0 \text{ as } N \to \infty \right\}.$$ Then $\Lambda = \{ \lambda_1, \lambda_2, \ldots\}$ is countable and there is an injective continuous group homomorphism $K \to \bU^{\bN}$ mapping $\alpha$ to $ \vec{\lambda} = (\lambda_1, \lambda_2, \ldots )$. In particular, $K$ is isomorphic to the closure of the subgroup generated by $\vec{\lambda}$. \end{thm}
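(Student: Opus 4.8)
The plan is to identify $\Lambda$ explicitly as the image under $\chi\mapsto\chi(\alpha)$ of the Fourier support of $\mathds{1}_U$, and then to build $\Phi$ out of the corresponding characters. First I would fix $\lambda\in\bU$ and pass to the product system: inside the compact abelian group $\bU\times K$ consider the rotation $S(z,k)=(\lambda z,\,k+\alpha)$ and the orbit closure $H:=\overline{\{(\lambda^{n},n\alpha):n\in\bZ\}}$ of the identity $(1,0)$. Then $H$ is a closed subgroup, $S$ restricts to a translation on $H$ with a dense orbit, so $S|_H$ is minimal and therefore uniquely ergodic with respect to the Haar measure $m_H$. Since $U$ is Jordan measurable we have $m_K(\partial U)=0$; and since the projection $\pi_K\colon H\to K$ is a surjective continuous homomorphism of compact groups it pushes $m_H$ forward to $m_K$, so the discontinuity set $\bU\times\partial U$ of the bounded function $g(z,k):=z\,\mathds{1}_U(k)$ is $m_H$-null. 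Sandwiching $g$ between continuous functions in the usual way for uniquely ergodic systems then gives, for the point $(1,0)\in H$,
$$\frac1N\sum_{n=1}^{N}\lambda^{n}\mathds{1}_{\mathcal{R}}(n)=\frac1N\sum_{n=1}^{N}g\big(S^{n}(1,0)\big)\ \longrightarrow\ \int_H g\,dm_H .$$

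Next I would compute $\int_H g\,dm_H$ by Fourier analysis on $H$. Writing $\mathds{1}_U=\sum_{\chi\in\widehat K}\widehat{\mathds{1}_U}(\chi)\chi$ in $L^2(K)$ and pulling back along $\pi_K$ (which is measure preserving, so $\pi_K^{\ast}$ embeds $L^2(K)$ isometrically into $L^2(H)$), the integral becomes a convergent sum over $\widehat K$ of integrals over $H$ of the characters $(1,\chi)$ of $\bU\times K$ restricted to $H$; each of these is $1$ if the character is trivial on $H$ and $0$ otherwise. Now $(m,\chi)\in\widehat{\bU\times K}=\bZ\times\widehat K$ is trivial on $H$ exactly when $\lambda^{m}\chi(\alpha)=1$, and $\chi\mapsto\chi(\alpha)$ is \emph{injective} on $\widehat K$ because $\overline{\bZ\alpha}=K$. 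Hence at most one summand survives, and the limit above equals $\widehat{\mathds{1}_U}(\chi_0)$ if there is a (unique) $\chi_0\in\widehat K$ with $\chi_0(\alpha)=\overline\lambda$, and equals $0$ otherwise. Since the Fourier support $S:=\{\chi\in\widehat K:\widehat{\mathds{1}_U}(\chi)\neq0\}$ is symmetric under $\chi\mapsto\overline\chi$, this yields $\Lambda=\{\chi(\alpha):\chi\in S\}$. Because $\mathds{1}_U\in L^2(K)$ the set $S$ is countable, and $\chi\mapsto\chi(\alpha)$ restricts to a bijection $S\to\Lambda$, so $\Lambda$ is countable; enumerating $\Lambda=\{\lambda_1,\lambda_2,\dots\}$ and letting $h_i\in S$ be the unique character with $h_i(\alpha)=\lambda_i$ we get $S=\{h_1,h_2,\dots\}$.

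Finally I would set $\Phi\colon K\to\bU^{\bN}$, $\Phi(k)=(h_i(k))_{i\ge1}$; this is a continuous group homomorphism with $\Phi(\alpha)=\vec{\lambda}$. For injectivity, suppose $k\in\bigcap_i\ker h_i$. Translation by $k$ multiplies the $\chi$-th Fourier coefficient of $\mathds{1}_U$ by $\overline{\chi(k)}$, which equals $1$ whenever $\widehat{\mathds{1}_U}(\chi)\neq0$; hence $\mathds{1}_{U+k}$ and $\mathds{1}_U$ have identical Fourier coefficients, so $m_K\big((U+k)\,\triangle\,U\big)=0$, and since $U$ and $U+k$ are open this forces $\overline U+k=\overline{U+k}=\overline U$, i.e.\ $k\in\operatorname{Stab}_K(\overline U)=\{0\}$. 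Thus $\Phi$ is an injective continuous homomorphism between compact groups, hence a topological isomorphism onto its closed image, and $\Phi(K)=\overline{\Phi(\bZ\alpha)}=\overline{\langle\vec{\lambda}\rangle}$ by continuity and density of $\bZ\alpha$ in $K$, which gives the final assertion. The main obstacle is the first two steps: $H$ may well be a proper subgroup of $\bU\times K$ (this happens precisely when $\lambda$ is a root of unity or a character value of $\alpha$), so the limit must be treated as an integral over $H$ rather than a product integral, and the passage from $\mathds{1}_U$ to its Fourier series must be carried out inside $L^2(H)$ rather than by an unjustified interchange of the limit with an infinite sum; the rest — countability, the stabilizer computation, and the Pontryagin-duality packaging — is routine.
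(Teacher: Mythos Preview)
Your proof is correct, and the overall architecture matches the paper's: both identify $\Lambda$ with $\{\chi(\alpha):\widehat{\mathds{1}_U}(\chi)\neq 0\}$, define the map $K\to\bU^{\bN}$ via the corresponding characters, and prove injectivity by showing that any $k$ in the kernel forces $\mathds{1}_U$ and its translate to agree almost everywhere, hence $k\in\operatorname{Stab}_K(\overline U)$. The injectivity step and the final packaging are essentially identical in the two arguments.

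The genuine difference lies in how the Ces\`aro limits $\frac1N\sum_{n=1}^N\lambda^n\mathds{1}_U(n\alpha)$ are computed. The paper proceeds by a direct case split: for $\lambda\notin\{\chi(\alpha)\}$ it approximates $\mathds{1}_U$ from above by continuous functions supported on a slight enlargement of $\overline U$ and uses Jordan measurability to control the error; for $\lambda=\overline{\chi(\alpha)}$ it observes that $\overline\chi\,\mathds{1}_U$ is Riemann integrable and invokes Weyl equidistribution directly. You instead pass to the orbit closure $H\le\bU\times K$ of $(1,0)$ under the joint rotation by $(\lambda,\alpha)$, use unique ergodicity of $S|_H$ together with $m_H$-nullity of the discontinuity set to get convergence of the averages to $\int_H g\,dm_H$, and then evaluate that integral by pulling the $L^2$ Fourier expansion of $\mathds{1}_U$ back along $\pi_K$. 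Your route is more unified (all $\lambda$ handled at once, and the interchange is justified cleanly as an $L^2$ inner product with the bounded function $\bar z$), at the cost of invoking the joining machinery; the paper's route is more hands-on and avoids introducing $H$, but requires the separate sandwiching argument for the non-eigenvalue case. Both establish exactly the same lemma, so either is a complete proof.
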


Note that this implies Theorem~\ref{thm: Kronecker factor of minimal system} in the case where the open sets are Jordan measurable but it is not difficult to construct examples with $U$ not Jordan measurable where this reconstruction is invalid despite Theorem~\ref{thm: Kronecker factor of minimal system} still guaranteeing the uniqueness of the Kronecker system.

\subsection{Further questions}

We now gather some open questions. Our first question asks about a natural extension of Theorem~\ref{thm: nilsystem and kronecker} to two nilsystems and more general homogeneous spaces.

\begin{question} Let $X_1=G_1/\Gamma_1$ and $X_2=G_2/\Gamma_2$ be homogeneous spaces, where $G_i$ are Polish groups and $\Gamma_i \leq G_i$ are cocompact discrete subgroups. For all $i=1,2$, suppose that $U_i \subset X_i$ is an open set with closure having a trivial stabilizer in the sense that whenever $g\in G$ is such that $g\overline{U_i} = \overline{U_i}$ then $gx=x$ for all $x \in X_i$. Suppose that $\tau_i \in G_i$ are such that $(X_i, x \mapsto \tau_i x)$ are minimal systems and $$\{n \in \bZ ~|~ \tau_1^n x_1 \in U_1\} = \{n \in \bZ ~|~ \tau_2^n x_2 \in U_2\}$$ for some $x_1 \in X_1$ and $x_2 \in X_2$. Then are the systems  $(X_1, x \mapsto \tau_1 x)$ and $(X_2, x \mapsto \tau_2 x)$ isomorphic (via a map sending $x_1$ to $x_2$)?  If not true in general, is it true if $G_i$ are nilpotent Lie groups?\end{question}

The next question asks whether one can extend our results on minimal dynamical $\bZ$ systems to actions of other groups. It turns out that this seems difficult even without any topology, so we assume full orbits rather than just dense orbits as follows.

\begin{question}\label{question: G sets} Suppose that $G$ is a group acting on sets $X_1$ and $X_2$ transitively. Thus there are $x_1 \in X_1$ and $x_2 \in X_2$ with $Gx_1 = X_1$ and $Gx_2 = X_2$. Suppose that $U_1 \subset X_1$ and $U_2 \subset X_2$ are subsets with trivial setwise stabilizer, i.e., $$\{g \in G ~|~ gU_1 = U_1\} = \{1\} = \{g \in G ~|~ gU_2 = U_2\}.$$ If $\{g \in G ~|~ gx_1 \in U_1\} = \{g \in G ~|~ gx_2 \in U_2\}$ then does it mean that the two actions are isomorphic? That is, does there exist a bijection $\phi:X_1 \to X_2$ such that $\phi(gx) = g\phi(x)$ for all $g \in G$ and $x \in X_1$? Does there exist one mapping $x_1$ to $x_2$? \end{question}

Using the language of \cite{DistinguishableOriginal} and \cite{DistinguishabilityInfiniteGroups} such group actions satisfying the trivial setwise stabilizer hypothesis are called $2$-distinguishable. This question is already interesting for $G$ finite. In fact, we can simplify this to the following interesting question, which again is already interesting for finite sets (we will show this equivalence in Section~\ref{section: equivalence of questions}).

\begin{mydef} Let $G$ be a group acting on a set $X$. We say that $U \subset X$ is \textit{simple} (for this action) if whenever $\pi:X \to X'$ is a factor (i.e., $X'$ is a $G$-set and $\pi:X \to X'$ is a surjective map such that $\pi(gx) = g\pi(x)$ for all $g \in G$, $x \in X$) such that $U = \pi^{-1}(U')$ for some $U' \subset X'$, then $\pi$ is a bijection (isomorphism of $G$-sets). In other words, $U$ is simple if it can not be realised as a preimage of a factor in a non-trivial way (i.e., the factor is not an isomorphism).\end{mydef}

\begin{question}\label{question: stable implies simple} Let $G$ be a group acting transitively on a set $X$ and suppose that $U \subset X$ has trivial setwise stabilizer, i.e., $\{g \in G ~|~ gU = U\}$ consists of only $1 \in G$. Then is it true that $U$ is simple for this $G$-action? In other words, is it true that if $\mathcal{B} \subset 2^X$ is a $G$-invariant partition of $X$ such that $U$ is a union of elements in $\mathcal{B}$, then $\mathcal{B}$ consists of only singletons?

\end{question}

\textbf{Acknowledgements:} The authors were partially supported by the Australian Research Council grant DP210100162. They are grateful to Sean Gasiorek and Robert Marangell for interesting discussions.

\section{Bohr Set Return Times of Minimal Systems (Proof of Theorem~\ref{thm: Kronecker factor of minimal system})}

\begin{proof}[Proof of Theorem~\ref{thm: Kronecker factor of minimal system}] Consider the product system $(Z, S)$ where $Z = X \times K$ and $S:Z \to Z$ is given by $$S(x,k) = (Tx, k+\alpha).$$ Let $\Theta = \overline{S^{\bZ}z_0}$ be the orbit closure of $z_0 = (x_0, 0) \in Z$. Let $\pi_X:\Theta \to X$ and $\pi_K: \Theta \to K$ denote the projection maps, which are surjective by the minimality of $(X,T)$ and $(K, k \mapsto k+\alpha)$. Let $$H = \{ h \in K ~|~ (x_0, h) \in \Theta\}.$$ 

\textbf{Claim:} $H$ is a closed subgroup of $K$. \\
Note that $(x_0, 0) = z_0 \in \Theta$ so $0 \in H$. Now if $h, h' \in H$ then that means $(x_0, h), (x_0, h') \in \Theta$ and thus there exists a sequence $n_1, n_2, \ldots \in \bZ$ such that $$S^{n_i}z_0 = (T^{n_i} x_0, n_i \alpha) \to (x_0, h)$$ as $i \to \infty$ and another sequence $n'_1, n'_2 \ldots \in \bZ$ such that $(T^{n'_i} x_0, n'_i \alpha) \to (x_0, h')$. Now fix $\epsilon > 0.$ We may choose $J = J(\epsilon) \in \bN$ such that\footnote{By this we mean that we have fixed an invariant metric $d_K$ on $K$ and we write $k \approx_{\epsilon} k'$ if $d_K(k, k') < \epsilon$ for $k,k' \in K$. } $n_J\alpha \approx_{\epsilon} h$ and choose $\delta >0$ such that $$d(T^{n_J}x , x_0) < \epsilon \text{ whenever } d(x,x_0) < \delta.$$ Now take $I$ large enough so that $d(T^{n'_I}x_0, x_0) < \delta$ and $n'_I \alpha \approx_{\epsilon} h'$. Thus $d(T^{n_J + n'_I}x_0, x_0) < \epsilon$ and $(n_J + n'_I)\alpha \approx_{2\epsilon} h + h'$. As $\epsilon>0$ is arbitrary, this means $(x_0, h + h') \in \overline{S^{\bZ}z_0} = \Theta$, thus $h+h' \in H$. Finally, notice that $H$ is closed (as it is $\pi_K(\Theta \cap (\{x_0\} \times K))$) and so it is a closed non-empty sub-semigroup and thus a closed subgroup as $K$ is compact. 

Now for $x \in X$ let $H_{x} = \{k \in K ~|~ (x, k) \in \Theta \}$, thus $H = H_{x_0}$. 

\textbf{Claim:} For each $x\in X$, we have that $H_{x}$ is a coset of $H$, i.e., $H_x$ and $H_{x'}$ are translates of each other for all $x, x' \in X$. \\
To see this note that by minimality of $(X,T)$, there exists a sequence $n_1, n_2, \ldots \in \bZ$ such that $T^{n_i}x \to x'$ and by compactness we suppose that $n_i\alpha \to \alpha_{x,x'} \in K$. It follows that $H_{x} + \alpha_{x,x'} \subset H_{x'}$. In particular this means that $\alpha_{0, x} + \alpha_{x, 0} \in H$, so $$H + \alpha_{0,x} = H - (\alpha_{0, x} + \alpha_{x, 0}) + \alpha_{0,x} = H - \alpha_{x,0} \supset H_x$$ and thus $H_x = H + \alpha_{0,x}$ is a coset of $H$ as desired.

\textbf{Claim:} $H \subset \operatorname{Stab}_K(\overline{U'})$. \\
To see this suppose that $h \in H$ and $u' \in U'$. Then by the surjectivity of $\pi_K:\Theta \to K$ there exists $x \in X$ such that $(x, u') \in \Theta$. Now we may find a sequence $n_1, n_2, \ldots \in \bZ$ such that $n_i \alpha \to u'$ and $T^{n_i}x_0 \to x$. So for large enough $i$, we have $n_i\alpha \in U'$ and thus by the assumption of the equality of return times (\ref{equality of return times}) we have $T^{n_i}x_0 \in U$. Since $(T^{n_i}x_0, n_i\alpha) \in \Theta$ we must have that $(T^{n_i}x_0, n_i\alpha + h) \in \Theta$ as $H_{T^{n_i}x_0}$ is a coset of $H$. This means that, for each fixed $i$, we may find a sequence $m_1, m_2, \ldots \in \bZ$ such that $S^{m_j}z_0 \to (T^{n_i}x_0, n_i\alpha + h)$. So for large enough $j$ we have that $T^{m_j}x_0$ is so close to $T^{n_i}x_0 \in U$ that $T^{m_j}x_0 \in U$ and so again by the equality of return times we have $m_j\alpha \in U'$. But as $m_j \alpha \to n_i\alpha + h$, we must have that $n_i\alpha + h \in \overline{U'}$. Finally, as $n_i\alpha \to u'$ we must have that $u' + h \in \overline{U'}$. Thus as $u' \in U'$ is arbitrary we must have that $h + U' \subset \overline{U'}$ and by continuity of translation $h+\overline{U'} \subset \overline{U'}$. Thus $h \in \operatorname{Stab}_K(\overline{U'})$ as claimed. 

Now by the assumption that this stabilizer is trivial, we get that $H$ is trivial and thus $\pi_X:\Theta \to X$ is injective and hence a homeomorphism. Thus our desired factor is $\pi_K \circ \pi_X^{-1}:X \to K$. \end{proof}

\section{Polynomial return times (proof of Theorem~\ref{thm: polynomial return times})}

\subsection{The case of compact connected abelian Lie groups}

We now embark on proving Theorem~\ref{thm: polynomial return times}. We first prove this for the case where $K_1$ and $K_2$ are compact connected abelian \textbf{Lie} groups, i.e., subgroups of a finite dimensional torus. We leave the more subtle case of non-Lie groups to the next subsection.

\begin{prop}\label{prop: polynomial orbit closure in compact Lie group} Let $K$ be a compact abelian Lie group with $\alpha \in K$ such that $\bZ\alpha$ is dense in $K$. Let $K_0$ be the connected component of the identity, thus $d = |K / K_0|$ is finite (as $K$ is a compact Lie group). Let $P(x) \in \bZ[x]$ be a non-constant polynomial with $P(0) = 0$. Then $$\overline{P(d\bZ) \alpha} = K_0$$ and $$\overline{P(\bZ)\alpha} = \bigcup_{a \in A} \left(K_0 + a \right)$$ for some finite set $A \subset K$.

\end{prop}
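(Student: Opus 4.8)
The plan is to reduce the whole statement to the connected case already handled by the Proposition preceding it (polynomial Weyl equidistribution on connected compact abelian groups).

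First I would record the structure forced by the hypothesis. Since $\bZ\alpha$ is dense in $K$ and $K/K_0$ is finite (hence discrete), the image of $\alpha$ in $K/K_0$ must generate $K/K_0$; therefore $K/K_0$ is cyclic of order $d$, the image of $\alpha$ has order exactly $d$, and $\beta := d\alpha$ lies in $K_0$. The key preliminary step is to show that $\beta$ densely generates the torus $K_0$, i.e. $\overline{\bZ\beta} = K_0$. To see this, put $L = \overline{\bZ\beta} \subseteq K_0$. Since $d\alpha = \beta \in L$, for every $m \in \bZ$ we have $m\alpha + L = (m \bmod d)\alpha + L$, so $\bigcup_{j=0}^{d-1}(j\alpha + L)$ is a closed subgroup of $K$ containing $\bZ\alpha$, hence equals $K$ by density. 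Being a union of at most $d$ cosets of $L$, this yields $[K:L] \le d$; but $[K:L] = [K:K_0]\,[K_0:L] = d\,[K_0:L]$, so $[K_0:L] = 1$ and $L = K_0$.

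Next, for each residue $j \in \{0,1,\dots,d-1\}$ I would rewrite the polynomial sequence along the class $d\bZ + j$. Since $P(x) - P(j)$ is divisible by $x-j$ in $\bZ[x]$, we get $P(dn+j) = P(j) + dn\,S_j(n)$ for some $S_j \in \bZ[x]$; setting $R_j(x) = x\,S_j(x) \in \bZ[x]$ gives $P(dn+j) = P(j) + d\,R_j(n)$, where $R_j$ has degree $\deg P \ge 1$ and is therefore non-constant. Hence $P(dn+j)\alpha = P(j)\alpha + R_j(n)\beta$, and because $K_0$ is connected with $\overline{\bZ\beta} = K_0$ and $R_j$ is non-constant, the Proposition above gives $\overline{\{R_j(n)\beta : n\in\bZ\}} = K_0$, so $\overline{\{P(dn+j)\alpha : n\in\bZ\}} = P(j)\alpha + K_0$. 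Splitting $\bZ$ into its residue classes modulo $d$ and taking the finite union of closures then gives $\overline{P(\bZ)\alpha} = \bigcup_{j=0}^{d-1}\bigl(P(j)\alpha + K_0\bigr)$, which has the asserted form with $A = \{P(j)\alpha : 0 \le j < d\}$; the $j=0$ term, using $P(0)=0$, gives $\overline{P(d\bZ)\alpha} = K_0$.

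I expect the only non-routine point to be the preliminary claim that $d\alpha$ densely generates $K_0$; once that index computation is in place, everything else is the elementary reduction $P(dn+j)\alpha = P(j)\alpha + R_j(n)(d\alpha)$ together with a single invocation of the connected-group equidistribution Proposition on the torus $K_0$.
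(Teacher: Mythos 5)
Your proof is correct and follows essentially the same route as the paper's: decompose $\bZ$ into residue classes modulo $d$, show $\overline{\bZ\,d\alpha}=K_0$ by a finite-index-in-a-connected-group argument, and invoke polynomial Weyl equidistribution on the connected torus $K_0$. The only cosmetic difference is that you make the index identity $[K:L]=d\,[K_0:L]$ explicit rather than appealing to the fact that a connected group has no proper closed finite-index subgroup, and you name the finite set $A=\{P(j)\alpha:0\le j<d\}$ directly rather than deriving the union-of-cosets form by re-applying the first claim to each $P_m$.
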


\begin{proof} Note that $dk \in K_0$ for all $k \in K$. In particular $\bZ d \alpha \subset K_0$. Let $K'_0 = \overline{\bZ d\alpha}$. Note that $K'_0 \subset K_0$ and it is of finite index in $K$ since $$K = \overline{\bZ\alpha} =  \bigcup_{m = 0}^{d-1} \overline{(d\bZ + m)\alpha} = \bigcup_{m = 0}^{d-1} \left(K'_0 + m\alpha \right).$$ Thus $K'_0$ is of finite index in $K_0$ as well. But as $K_0$ is connected, we must have that $K'_0 = K_0$. Now let $Q(x) = \frac{1}{d}P(dx) \in \bZ[x]$ (as $P(0) = 0$). From Weyl equidistribution and connectedness of $K_0$ we see that the sequence $Q(1)d\alpha, Q(2)d\alpha, Q(3) d\alpha \ldots $ equidistributes in $K_0$. As $Q(n)(d\alpha) = P(dn)\alpha$ this implies the first claim that $$\overline{P(d\bZ) \alpha} = K_0.$$

Now for the second claim, we write $$\overline{P(\bZ)\alpha} = \bigcup_{m=0}^{d-1} \overline{P(d\bZ + m)\alpha} = \bigcup_{m=0}^{d-1} \overline{P_m(d\bZ)\alpha  + a_m} $$ for some non-constant $P_m(x) \in \bZ[x]$ with $P_m(0) = 0$ and $a_m \in K$. Thus applying the previous claim to the $P_m$ we get that $\overline{P_m(d\bZ)\alpha  + a_m}$ is a coset of $K_0$, as required. \end{proof}

\begin{prop} Let $K_1$ and $K_2$ be compact connected abelian Lie groups and suppose that $\alpha_i \in K_i$ with $\overline{\bZ\alpha_i} = K_i$. Let $P(x) \in \bZ[x]$ be a polynomial with $P(0) = 0$ such that $P(\bZ)$ is not contained in any proper subgroup of $\bZ$. Let $U_i \subset K_i$ be non-empty open sets and let $\mathcal{R}_i = \{n \in \bZ ~|~ P(n)\alpha_i \in U_i\}$. If $\mathcal{R}_1 = \mathcal{R}_2$ and $\operatorname{Stab}_{K_1}(\overline{U_1}) = \{0 \}$ and $\operatorname{Stab}_{K_2}(\overline{U_2}) = \{0 \}$ then there exists an isomorphism $K_1 \to K_2$ (of topological groups, so continuous) mapping $\alpha_1$ to $\alpha_2$.

\end{prop}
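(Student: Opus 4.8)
The plan is to adapt the product-system argument from the proof of Theorem~\ref{thm: Kronecker factor of minimal system}, but working with the polynomial orbit and being careful that its closure need not be a subgroup. Write $g = (\alpha_1,\alpha_2) \in K_1 \times K_2$, let $L = \overline{\bZ g}$, let $L_0$ be its identity component, and let $d = |L/L_0|$, which is finite since $L$ is a closed subgroup of the compact Lie group $K_1 \times K_2$. Set $W = \overline{\{P(n)g : n \in \bZ\}} \subseteq L$. Note $P$ is non-constant (otherwise $P(\bZ) = \{0\}$ sits in a proper subgroup of $\bZ$), so Proposition~\ref{prop: polynomial orbit closure in compact Lie group} applied to $L$ (and its proof) gives $L_0 = \overline{\bZ(dg)}$, $\overline{P(d\bZ)g} = L_0$, and $W = \bigcup_{c \in C}(L_0 + cg)$ with $C = \{P(m) \bmod d : m \in \bZ\} \subseteq \bZ/d\bZ$; moreover, since $P(\bZ)$ is not contained in any proper subgroup of $\bZ$ it generates $\bZ$, hence $C$ generates $\bZ/d\bZ$. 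From $\mathcal{R}_1 = \mathcal{R}_2$ and the definition of $W$ one reads off the two facts we shall use repeatedly: for $(w_1,w_2) \in W$, (P1) $w_1 \in U_1 \Rightarrow w_2 \in \overline{U_2}$, and (P3) $w_2 \in U_2 \Rightarrow w_1 \in \overline{U_1}$ --- indeed, approximate $(w_1,w_2)$ by points $P(n)g$ and use $n \in \mathcal{R}_1 \iff n \in \mathcal{R}_2$ together with openness of $U_1, U_2$.

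First I would show that $L_0$ is the graph of an isomorphism. Since $L_0 = \overline{\bZ(dg)}$ and each $K_i$ is connected, the coordinate projections satisfy $\pi_i(L_0) = \overline{\bZ(d\alpha_i)} = K_i$. I claim $\{k \in K_2 : (0,k) \in L_0\} \subseteq \operatorname{Stab}_{K_2}(\overline{U_2})$ and, symmetrically, $\{k \in K_1 : (k,0) \in L_0\} \subseteq \operatorname{Stab}_{K_1}(\overline{U_1})$; by triviality of the stabilizers these groups vanish, so $\pi_1|_{L_0}$ and $\pi_2|_{L_0}$ are isomorphisms of compact groups, and $\psi_0 := \pi_2 \circ (\pi_1|_{L_0})^{-1} : K_1 \to K_2$ is a topological group isomorphism with $L_0 = \{(k,\psi_0(k)) : k \in K_1\}$ and, since $dg \in L_0$, with $\psi_0(d\alpha_1) = d\alpha_2$. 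For the first claim, let $(0,h) \in L_0$ and $u_2 \in U_2$; using $L_0 = \overline{P(d\bZ)g}$ and $\pi_2(L_0) = K_2$, pick $n \in d\bZ$ with $P(n)\alpha_2$ arbitrarily close to $u_2$, so that $P(n)\alpha_2 \in U_2$ and hence $P(n)\alpha_1 \in U_1$; since $P(n)g \in L_0$, the point $P(n)g + (0,h) = (P(n)\alpha_1,\, P(n)\alpha_2 + h)$ lies in $L_0 \subseteq W$, so (P1) gives $P(n)\alpha_2 + h \in \overline{U_2}$, and letting $P(n)\alpha_2 \to u_2$ gives $u_2 + h \in \overline{U_2}$; thus $h + \overline{U_2} \subseteq \overline{U_2}$, and then $h \in \operatorname{Stab}_{K_2}(\overline{U_2})$ because the closure of $\{nh : n \geq 1\}$ is a closed subsemigroup of the compact group $K_2$, hence a subgroup, so it contains $-h$, giving $-h + \overline{U_2} \subseteq \overline{U_2}$ as well. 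The symmetric claim is proved the same way using (P3) in place of (P1).

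The remaining step --- upgrading $\psi_0(d\alpha_1) = d\alpha_2$ to $\psi_0(\alpha_1) = \alpha_2$ --- is where the hypothesis on $P(\bZ)$ is used, and I expect it to be the main obstacle, since $W$ is in general a proper subset of $L$ and only the cosets $L_0 + cg$ with $c \in C$ carry return-time information. Put $t = \psi_0(\alpha_1) - \alpha_2 \in K_2$, so $dt = \psi_0(d\alpha_1) - d\alpha_2 = 0$. A direct computation (using $\psi_0(k - c\alpha_1) = \psi_0(k) - c\psi_0(\alpha_1)$) shows $L_0 + cg = \{(k,\, \psi_0(k) - ct) : k \in K_1\}$ for every $c$. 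Fix now $c \in C$, so that $L_0 + cg \subseteq W$. Applying (P1) to this coset gives $\psi_0(U_1) - ct \subseteq \overline{U_2}$, hence $\psi_0(\overline{U_1}) - ct \subseteq \overline{U_2}$ after taking closures; applying (P3) to the same coset in contrapositive form gives that $\psi_0(K_1 \setminus \overline{U_1}) - ct = K_2 \setminus (\psi_0(\overline{U_1}) - ct)$ is disjoint from $U_2$, i.e. $U_2 \subseteq \psi_0(\overline{U_1}) - ct$, hence $\overline{U_2} \subseteq \psi_0(\overline{U_1}) - ct$. Therefore $\overline{U_2} = \psi_0(\overline{U_1}) - ct$ for every $c \in C$; taking $c = 0 \in C$ gives $\overline{U_2} = \psi_0(\overline{U_1})$, whence $\overline{U_2} - ct = \overline{U_2}$, i.e. $ct \in \operatorname{Stab}_{K_2}(\overline{U_2}) = \{0\}$, for all $c \in C$.

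To finish, observe that $\{c \in \bZ/d\bZ : ct = 0\}$ is a subgroup of $\bZ/d\bZ$ which contains $C$, and $C$ generates $\bZ/d\bZ$, so this subgroup is all of $\bZ/d\bZ$; in particular $t = 1 \cdot t = 0$. Hence $\psi_0(\alpha_1) = \alpha_2$, and $\psi_0 : K_1 \to K_2$ is the required isomorphism of topological groups mapping $\alpha_1$ to $\alpha_2$.
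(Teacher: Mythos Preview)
Your proof is correct. The overall architecture matches the paper's---form the product $K_1\times K_2$, take the closure $W$ of the polynomial orbit of $g=(\alpha_1,\alpha_2)$, invoke Proposition~\ref{prop: polynomial orbit closure in compact Lie group} to write $W$ as a finite union of cosets of the identity component, and use the equality $\mathcal{R}_1=\mathcal{R}_2$ together with triviality of the stabilizers to pin things down---but the execution diverges in an interesting way.

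The paper proves in one stroke that $\pi_1|_W$ is injective: given $\theta,\theta'\in W$ with $\pi_1(\theta)=\pi_1(\theta')$, it argues directly (tracking which coset each lies in) that $\pi_2(\theta-\theta')\in\operatorname{Stab}_{K_2}(\overline{U_2})=\{0\}$. This immediately makes $W$ homeomorphic to the connected group $K_1$, forcing $W=L_0$, so the resulting isomorphism sends $P(n)\alpha_1\mapsto P(n)\alpha_2$ for every $n$; the final step is then the one-line B\'ezout identity $1=\sum a_iP(n_i)$. By contrast, you first restrict attention to $L_0$ alone (using only $n\in d\bZ$) to get the graph isomorphism $\psi_0$ with $\psi_0(d\alpha_1)=d\alpha_2$, and then run a separate coset-by-coset argument---exploiting your sharper description $W=\bigcup_{c\in C}(L_0+cg)$ with $C=\{P(m)\bmod d\}$---to force the torsion element $t=\psi_0(\alpha_1)-\alpha_2$ to vanish. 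The paper's route is shorter and never needs the explicit parametrization of the cosets by $C$; your route makes the role of the hypothesis ``$P(\bZ)$ generates $\bZ$'' more transparent, since it enters precisely as ``$C$ generates $\bZ/d\bZ$'' rather than through a B\'ezout identity.
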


\begin{proof} Let $\alpha = (\alpha_1, \alpha_2) \in K_1 \times K_2$ and let $K = \overline{\bZ\alpha} \subset K_1 \times K_2$. Let $\Theta = \overline{P(\bZ)\alpha}$. We know that $$\Theta = \bigcup_{a \in A} (K_0 + a)$$ for some finite $A \subset K$ where $K_0$ is the identity connected component of $K$. Let $\pi_i : K \to K_i$ be the projection maps. We know that $\pi_i$ is surjective by the assumption that $\overline{\bZ\alpha_i} = K_i$. In fact, since $K_0$ is finite index in $K$, we have that $\pi_i(K_0)$ of finite index in $K_i$ and thus $\pi_i(K_0) = K_i$ by the assumption that $K_i$ is connected. We now wish to show that $\pi_i$ is injective on $\Theta$. To see this, suppose that $\theta, \theta' \in \Theta$ are such that $\pi_1(\theta) = \pi_1(\theta')$. We know that $\theta \in K_0 + a$ and $\theta' \in K_0 + a'$ for some $a,a' \in A$. We now wish to show that $\pi_2(\theta - \theta') \in \operatorname{Stab}_{K_2}(\overline{U_2})$. To do this, suppose that $u_2 \in U_2$.  From $\pi_2(K_0) = K_2$ we get that $\pi_2(K_0 + a') = K_2$, thus there exists $x \in K_1$ such that $(x, u_2) \in K_0 + a'$. Now $$(x, u_2 + \pi_2(\theta - \theta')) = (x, u_2) + \theta - \theta' \in (K_0 + a') + (K_0 + a - a') = K_0 + a,$$ that is $(x, u_2 + \pi_2(\theta - \theta')) \in \Theta$. Now since $(x, u_2) \in \Theta$ this means there exists a sequence $n_1, n_2, \ldots$ of integers such that $P(n_j) \alpha \to (x, u_2)$. In particular, $P(n_j)\alpha_2 \to u_2$  so $P(n_j)\alpha_2 \in U_2$ for sufficiently large $j$. But since $\mathcal{R}_1 = \mathcal{R}_2$ this means that $P(n_j)\alpha_1 \in U_1$. Now since $(x, u_2) \in K_0 + a'$ and $K_0 + a'$ is open in $K$, we must have that $P(n_j)\alpha \in K_0 + a'$ for sufficiently large $j$. Thus $P(n_j)\alpha + (\theta - \theta') \in K_0 + a \subset \Theta$ for sufficiently large $j$. In particular, this means that we may choose an integer $n'_j$ so that $P(n'_j)\alpha$ is so close to $P(n_j)\alpha + (\theta - \theta')$ so that $P(n'_j)\alpha_1$ is sufficiently close to $P(n_j)\alpha_1 \in U_1$ so that $P(n'_j)\alpha_1 \in U_1$ and so that $P(n'_j)\alpha \to (x, u_2 + \pi_2(\theta - \theta'))$. But as $\mathcal{R}_1 = \mathcal{R}_2$ this means that $P(n'_j)\alpha_2 \in U_2$. But $P(n'_j)\alpha_2 \to u_2 +\pi_2(\theta - \theta')$, thus showing that $u_2 +\pi_2(\theta - \theta') \in \overline{U_2}$. So we have shown that if $u_2 \in U_2$ then $u_2 +\pi_2(\theta - \theta') \in \overline{U_2}$, and by continuity of addition this holds more generally for $u_2 \in \overline{U_2}$, thus $\pi_2(\theta - \theta') \in \operatorname{Stab}_{K_2}(\overline{U_2})$. But by the assumption that $\operatorname{Stab}_{K_2}(\overline{U_2}) = \{0\}$ we get that $\pi_2(\theta) = \pi_2(\theta')$, which means that $\theta = \theta'$. Thus we have shown that $\pi_1:\Theta \to K_1$ is injective, and thus a homeomorphism. As $K_1$ is connected this means that $\Theta$ is connected and so in fact $ \Theta = K_0$ (as $P(0)\alpha = (0,0)$). So $\pi_1: K_0 \to K_1$ is in fact an isomorphism of topological groups. By symmetry (using now the assumption that  $\operatorname{Stab}_{K_1}(\overline{U_1}) = \{0 \}$) we get that $\pi_2:K_0 \to K_2$ is an isomorphism of topological groups. Thus we have an isomorphism $K_1 \to K_2$ mapping $P(n)\alpha_1$ to $P(n)\alpha_2$ for all $n \in \bZ$. Finally, from the assumption that $P(\bZ)$ is not contained in any proper subgroup, we have that $1 = \sum_{i} a_i P(n_i)$ for some integers $a_i$ and $n_i$. Thus $\alpha_1 = \sum_{i} a_i P(n_i) \alpha_1$ is mapped to $\sum_{i} a_i P(n_i)\alpha_2 = \alpha_2$ under this isomorphism.  \end{proof}

\subsection{Infinite dimensional polynomial orbit}

We now remove the assumption that our groups $K_1$ and $K_2$ are Lie groups (i.e., embedded in a finite dimensional torus). Thus let $K_1$ and $K_2$ be compact metrizable connected abelian groups. Note that this means they are closed subgroups of the countable dimensional torus $\bT^{\bN}$ and thus we may approximate them by compact Lie groups as follows.

\begin{mydef} Let $(X, d_X)$ and $(Y, d_Y)$ be metric spaces. For $\delta>0$, we say that a map $\phi:X \to Y$ is a $\delta$-almost isometry if for all $x_1,x_2 \in X$ we have that $$|d_X(x_1, x_2) - d_Y(\phi(x_1), \phi(x_2))| < \delta.$$

\end{mydef}

Thus we have a decreasing sequence $\delta_1 > \delta_2> \ldots$ of positive real numbers converging to $0$ and, for each positive integer $D$, surjective continuous group homomorphisms $\phi^1_D: K_1 \to F^1_D$ and $\phi^2_D: K_2 \to F^2_D$ where $F^1_D$ and $F^2_D$ are closed subgroups of $\bT^D$ and $\phi^1_D$ and $\phi^2_D$ are $\delta_D$ almost isometries (we have fixed an invariant metric on each compact metrizable abelian group $K$, which we shall call $d_K$ or $d$ if clear from the context). For concreteness, we construct these maps conveniently as follows. Equip $\bT^{D}$ with the metric $$d(x, y) = \sum_{i=1}^{D} 2^{-i}\| x_i - y_i\|,$$ where $\| v\|$ is the shortest distance from $v \in \bT$ to $0 \in \bT$, and $\bT^{\bN}$ with the metric $$d(x, y) = \sum_{i=1}^{\infty} 2^{-i}\| x_i - y_i\|,$$ that way $\phi_D^1$ and $\phi_D^2$ are (restrictions to $K_1$ and $K_2$ respectively) the projections onto the first $D$ co-ordinates, which are clearly $O(2^{-D})$-almost isometries. The $F_D^1$ and $F_D^2$ are then defined to be the images of these, thus $\phi^1_D: K_1 \to F^1_D$ and $\phi^2_D: K_2 \to F^2_D$ are surjective.

Now let $P(x) \in \bZ[x]$ be a non-constant polynomial such that $P(0)=0$. We now let $$\Theta = \overline{P(\bZ) \alpha} \subset K_1 \times K_2$$ where $\alpha = (\alpha_1, \alpha_2) \in K_1 \times K_2$ are such that $\overline{\bZ\alpha_i} = K_i$ for $i=1,2$. We let $\phi_D:K_1 \times K_2 \to F_D^1 \times F_D^2$ be the map $\phi_D = \phi_D^1 \times \phi_D^2$. We also let $\Theta_D = \phi_D(\Theta) \subset F_D^1 \times F_D^2$. We equip $K_1 \times K_2$ and $F_D^1 \times F_D^2$ with the metrics $d((x_1, y_1), (x_2, y_2)) = \max\{d(x_1,x_2), d(y_1, y_2)\}$, that way $\phi_D$ is also a $O(2^{-D})$-almost isometry. Observe that $$\Theta_D = \overline{P(\bZ) \phi_D(\alpha)}$$ and thus we may apply Proposition~\ref{prop: polynomial orbit closure in compact Lie group} to obtain that \begin{align}\label{Theta_D a union of cosets} \Theta_D = \bigcup_{a \in A_D} (\Gamma_D + a) \end{align} for some finite $A_D \subset F_D^1 \times F_D^2$ and closed connected subgroup $\Gamma_D \leq F_D^1 \times F_D^2$. Note that $\Gamma_D \subset \Theta_D$ as $0 \in \Theta_D$ since $P(0)=0$.

Let $$\Gamma = \bigcap_{D=1}^{\infty} \phi_D^{-1}(\Gamma_D)$$ and notice that this is a closed subgroup.

\begin{lemma}\label{lemma: Gamma subset of Theta} We have $\Gamma \subset \Theta$. \end{lemma}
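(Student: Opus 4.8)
The plan is to prove $\Gamma \subset \Theta$ by showing that every $\gamma \in \Gamma$ is a limit of points in $\Theta$, using that $\Theta$ is closed. The one observation that makes this work is the construction of the $\phi_D$ as $O(2^{-D})$-almost isometries: if two points have the \emph{same} image under a $\delta$-almost isometry, then they are within distance $\delta$ of each other. So no separate injectivity estimate on $\phi_D$ is needed; the almost-isometry hypothesis is exactly what is required.

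Concretely, I would fix $\gamma \in \Gamma$. By the definition of $\Gamma = \bigcap_D \phi_D^{-1}(\Gamma_D)$, for every positive integer $D$ we have $\phi_D(\gamma) \in \Gamma_D$, and since $\Gamma_D \subset \Theta_D = \phi_D(\Theta)$ (recall $\Gamma_D \subset \Theta_D$ because $0 \in \Theta_D$), there exists $\theta_D \in \Theta$ with $\phi_D(\theta_D) = \phi_D(\gamma)$. Now apply the $\delta_D$-almost isometry property of $\phi_D$ to the pair $\theta_D, \gamma$: since $d(\phi_D(\theta_D), \phi_D(\gamma)) = 0$, we get $d(\theta_D, \gamma) < \delta_D = O(2^{-D})$. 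Hence $\theta_D \to \gamma$ as $D \to \infty$, and as each $\theta_D$ lies in the closed set $\Theta = \overline{P(\bZ)\alpha}$, we conclude $\gamma \in \Theta$. This proves $\Gamma \subset \Theta$.

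I do not expect a genuine obstacle in this lemma; it is a short compactness/approximation argument, and the only point to be careful about is the identity $\Theta_D = \phi_D(\Theta)$ (rather than just $\Theta_D \supset \phi_D(\Theta)$), which holds because $\phi_D$ is continuous and $K_1 \times K_2$ is compact, so $\phi_D(\Theta)$ is already closed and coincides with $\overline{P(\bZ)\phi_D(\alpha)}$. The real content of the surrounding argument — matching up the cosets $\Gamma_D$ across different $D$ and eventually identifying $\Gamma$ with $\Theta$ — will come in the subsequent lemmas; here all that is being extracted is the easy inclusion.
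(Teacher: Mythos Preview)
Your proof is correct and matches the paper's argument essentially verbatim: both use that $\phi_D(\gamma)\in\Gamma_D\subset\phi_D(\Theta)$ to find $\theta_D\in\Theta$ with the same image, invoke the $O(2^{-D})$-almost isometry to get $d(\gamma,\theta_D)<O(2^{-D})$, and conclude by closedness of $\Theta$. Your remark on why $\Theta_D=\phi_D(\Theta)$ is closed is a nice clarification the paper leaves implicit.
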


\begin{proof} Since $\phi_D$ is a $O(2^{-D})$-almost isometry, we have that if $p \in \phi_D^{-1}(\Gamma_D)$ then $\phi_D(p) \in \Gamma_D \subset \phi_D(\Theta)$ and so $d(p, \Theta) < O(2^{-D})$. But $\Theta$ is closed, so the proof is complete by letting $D \to \infty$. \end{proof}

\begin{lemma}\label{lemma: nested Gamma_D} We have that $\phi_D^{-1}(\Gamma_D) \supset \phi_{D+1}^{-1}(\Gamma_{D+1})$.

\end{lemma}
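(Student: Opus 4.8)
The plan is to exploit the fact that the maps $\phi_D$ are nested coordinate projections. For $i=1,2$ let $\psi_D^i : F_{D+1}^i \to F_D^i$ denote the restriction to $F_{D+1}^i$ of the projection $\bT^{D+1} \to \bT^{D}$ onto the first $D$ coordinates; this does map $F_{D+1}^i$ onto $F_D^i$, since $F_D^i$ is by construction the image of $K_i$ under the projection onto the first $D$ coordinates and $\phi_D^i = \psi_D^i \circ \phi_{D+1}^i$. Set $\psi_D = \psi_D^1 \times \psi_D^2 : F_{D+1}^1 \times F_{D+1}^2 \to F_D^1 \times F_D^2$, a surjective continuous group homomorphism with $\phi_D = \psi_D \circ \phi_{D+1}$. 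Then $\phi_D^{-1}(\Gamma_D) = \phi_{D+1}^{-1}\bigl(\psi_D^{-1}(\Gamma_D)\bigr)$, so it suffices to prove $\Gamma_{D+1} \subset \psi_D^{-1}(\Gamma_D)$, i.e.\ that $\psi_D(\Gamma_{D+1}) \subset \Gamma_D$.

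First I would record that $\Theta_D = \psi_D(\Theta_{D+1})$, which is immediate from $\Theta_D = \phi_D(\Theta) = \psi_D(\phi_{D+1}(\Theta)) = \psi_D(\Theta_{D+1})$. Since $\Gamma_{D+1} \subset \Theta_{D+1}$ (as noted just after \eqref{Theta_D a union of cosets}) and $\psi_D$ is a homomorphism, $\psi_D(\Gamma_{D+1})$ is a subgroup of $\Theta_D$ containing $0$, and it is connected, being the continuous image of the connected group $\Gamma_{D+1}$.

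Next I would identify $\Gamma_D$ with the connected component of $0$ in $\Theta_D$. The set $\Theta_D$ is compact, being a continuous image of the compact set $\Theta$, and by \eqref{Theta_D a union of cosets} it is a union of finitely many cosets of the closed connected subgroup $\Gamma_D$. Each such coset is closed and connected, and because there are only finitely many of them each is also open in $\Theta_D$; hence the cosets $\Gamma_D + a$, $a \in A_D$, are exactly the connected components of $\Theta_D$, and the one containing $0$ is $\Gamma_D$ itself. Therefore the connected set $\psi_D(\Gamma_{D+1}) \subset \Theta_D$, which contains $0$, is contained in $\Gamma_D$, i.e.\ $\psi_D(\Gamma_{D+1}) \subset \Gamma_D$, completing the proof.

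The argument is essentially routine; the only point that takes a moment of care is the verification that the finitely many cosets appearing in \eqref{Theta_D a union of cosets} really are the connected components of $\Theta_D$ — it is here that compactness and finiteness are used — so that one may legitimately invoke the connectedness of $\psi_D(\Gamma_{D+1})$ to conclude that it lies in the component $\Gamma_D$.
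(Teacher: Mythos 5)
Your proof is correct and is the same argument the paper gives, only written out in full: the paper's one-line proof, ``the natural projection maps $\Theta_{D+1}$ to $\Theta_D$, so in particular maps the connected component $\Gamma_{D+1}$ to the connected component $\Gamma_D$,'' is exactly your chain $\phi_D = \psi_D \circ \phi_{D+1}$, $\Theta_D = \psi_D(\Theta_{D+1})$, and the identification of $\Gamma_D$ as the clopen connected component of $0$ in $\Theta_D$ via the finite coset decomposition. The extra care you take in justifying that the cosets are the connected components is exactly what the paper leaves implicit.
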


\begin{proof} The natural projection $F_{D+1}^1 \times F_{D+1}^2 \to F_{D}^1 \times F_{D}^2$ maps $\Theta_{D+1}$ to $\Theta_D$, so in particular maps the connected component $\Gamma_{D+1}$ to the connected component $\Gamma_D$.  \end{proof}

We let $\pi_i: K_1 \times K_2 \to K_i$ and $\pi_{i,D}:F_D^1 \times F_D^2 \to F_D^i$ denote the projections.

\begin{lemma}\label{lemma: Gamma maps onto K_i} For $i=1,2$ we have that $\pi_i(\Gamma) = K_i$. \end{lemma}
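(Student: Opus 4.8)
The plan is to verify the surjectivity at each finite level $F_D^1 \times F_D^2$ and then lift it to $K_1 \times K_2$ by a compactness argument for nested intersections. Fix $i \in \{1,2\}$; by symmetry I will treat $i = 1$. First I would record that $\pi_{i,D}(\Gamma_D) = F_D^i$ for every $D$. Writing $K^{(D)} = \overline{\bZ \phi_D(\alpha)} \le F_D^1 \times F_D^2$, Proposition~\ref{prop: polynomial orbit closure in compact Lie group} identifies $\Gamma_D$ with the identity component of $K^{(D)}$, which therefore has finite index in $K^{(D)}$ since $F_D^1 \times F_D^2$ is a compact Lie group. Since $\pi_{i,D} \circ \phi_D = \phi_D^i \circ \pi_i$ and $\phi_D^i : K_i \to F_D^i$ is surjective, $\pi_{i,D}(K^{(D)}) = \overline{\bZ \phi_D^i(\alpha_i)} = \phi_D^i(\overline{\bZ\alpha_i}) = \phi_D^i(K_i) = F_D^i$, and $F_D^i$ is connected, being a continuous image of the connected group $K_i$. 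A continuous homomorphic image of a finite-index subgroup has finite index, so $\pi_{i,D}(\Gamma_D)$ is a finite-index closed — hence open — subgroup of $F_D^i$, and connectedness forces it to equal $F_D^i$.

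Next I would deduce that $\pi_1\bigl(\phi_D^{-1}(\Gamma_D)\bigr) = K_1$ for every $D$. Indeed, given $x \in K_1$, the previous step provides $(a,b) \in \Gamma_D$ with $a = \phi_D^1(x)$, and surjectivity of $\phi_D^2$ provides $y \in K_2$ with $\phi_D^2(y) = b$; then $\phi_D(x,y) = (a,b) \in \Gamma_D$, so $(x,y) \in \phi_D^{-1}(\Gamma_D)$ and hence $x \in \pi_1\bigl(\phi_D^{-1}(\Gamma_D)\bigr)$.

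Finally I would pass to the limit. By Lemma~\ref{lemma: nested Gamma_D} the compact sets $C_D := \phi_D^{-1}(\Gamma_D)$ decrease with $D$ and $\bigcap_D C_D = \Gamma$. For a continuous map between compact Hausdorff spaces and a decreasing sequence of compacta one has $\pi_1\bigl(\bigcap_D C_D\bigr) = \bigcap_D \pi_1(C_D)$, since for $z \in \bigcap_D \pi_1(C_D)$ the fibres $\pi_1^{-1}(z) \cap C_D$ are nonempty, nested and compact and hence have nonempty intersection. Combining this with the previous paragraph gives $\pi_1(\Gamma) = \bigcap_D \pi_1(C_D) = \bigcap_D K_1 = K_1$, and the case $i = 2$ is identical.

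I expect the only genuinely delicate point to be this last step: the identity $\pi\bigl(\bigcap_D C_D\bigr) = \bigcap_D \pi(C_D)$ is false for arbitrary decreasing families of sets, so one must really use compactness of the fibres $\pi_1^{-1}(z) \cap C_D$; the rest is routine bookkeeping with the maps $\phi_D, \phi_D^i, \pi_i, \pi_{i,D}$ together with the elementary fact that a connected compact group has no proper closed subgroup of finite index.
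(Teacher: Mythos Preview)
Your proof is correct and follows essentially the same strategy as the paper: first establish $\pi_{i,D}(\Gamma_D) = F_D^i$ at each finite level, then lift to $K_i$ by a compactness argument using the nestedness from Lemma~\ref{lemma: nested Gamma_D}. Your final step, invoking the general identity $\pi_1\bigl(\bigcap_D C_D\bigr) = \bigcap_D \pi_1(C_D)$ for a decreasing sequence of compacta, is a slightly cleaner packaging of what the paper does by hand (choosing $\beta_D \in \phi_D^{-1}(\Gamma_D)$, passing to a convergent subsequence, and using the $O(2^{-D})$-almost-isometry property to identify the limit), but the underlying idea is the same.
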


\begin{proof} Fix $i \in \{1,2\}$. We first show that $\pi_{i,D}(\Gamma_D) = F_D^i$. First notice that $$\pi_{i,D}(\Theta_D) = \overline{P(\bZ)\phi_D(\alpha_i)} = F_D^i$$ where the last equality follows from Weyl equdisitribution, connectendess of $F_{D}^i$ and $\overline{\bZ \phi_D(\alpha_i)} = F_D^i$. But$$\pi_{i,D}(\Theta_D) = \bigcup_{a \in A_D} \left(\pi_{i,D}(\Gamma_D) + \pi_{i,D}(a)\right)$$ is a disjoint union of closed sets, thus by connectedness they must all be the same and equal to $\pi_{i,D}(\Gamma_D)$, so indeed $\pi_{i,D}(\Gamma_D) = F_D^i$. Now let $k \in K_i$. Then by our first claim we have that $\phi_D^i(k) = \pi_{i,D}(\gamma_D)$ for some $\gamma_D \in \Gamma_D$. Now by surjectivity of $\phi_D^i$, we may take $\beta_D \in \phi_D^{-1}(\Gamma_D)$ such that $\phi_D(\beta_D) = \gamma_D$. By compactness, we may pass to a subsequence such that $\beta_{D_j} \to \beta$ for some $\beta \in K_1 \times K_2$. But it now follows from Lemma~\ref{lemma: nested Gamma_D} that $\beta \in \Gamma$. The proof will be complete if we can show that $\pi_{i}(\beta) = k$. To see this, note that $\phi_D^i (\pi_i(\beta)) = \pi_{i,D}(\phi_D(\beta))$ but $\phi_D(\beta)$ can be made arbitrarily close to $\phi_D(\beta_D) = \gamma_D$ for large enough $D$. So $\phi_D^i(\pi_i(\beta))$ can be made arbitrarily close to $\pi_{i,D}(\gamma_D) = \phi_D^i(k)$. It now follows that $\pi_i(\beta)$ is arbitrarily close to $k$ for large enough $D$ (as $\phi_D^i$ is a $O(2^{-D})$-almost isometry), thus they are equal. \end{proof}

\begin{prop}\label{prop: Gamma union of cosets} There exists $A \subset K_1 \times K_2$, with $0 \in A$, such that $$\Theta = \bigcup_{a \in A} (\Gamma + a).$$

\end{prop}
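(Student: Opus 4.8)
The plan is to show that $\Theta$ is invariant under translation by $\Gamma$, i.e. that $\Theta + \Gamma = \Theta$; granting this, the proposition follows at once by taking $A = \Theta$ (or, if one prefers, any transversal of $\Gamma$ in $\Theta$ that contains $0$), since $0 \in \Theta$ because $P(0)\alpha = 0$. The inclusion $\Theta \subseteq \Theta + \Gamma$ is automatic from $0 \in \Gamma$, so the whole content lies in proving the reverse inclusion $\Theta + \Gamma \subseteq \Theta$.

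To prove $\Theta + \Gamma \subseteq \Theta$, I would fix $\theta \in \Theta$ and $\gamma \in \Gamma$ and argue one approximation level at a time. For each $D$ we have $\phi_D(\theta) \in \phi_D(\Theta) = \Theta_D$, so by \eqref{Theta_D a union of cosets} there is some $a \in A_D$ with $\phi_D(\theta) \in \Gamma_D + a$. Since $\gamma \in \Gamma \subseteq \phi_D^{-1}(\Gamma_D)$ we have $\phi_D(\gamma) \in \Gamma_D$, and as $\Gamma_D$ is a subgroup this gives $\phi_D(\theta + \gamma) = \phi_D(\theta) + \phi_D(\gamma) \in \Gamma_D + a \subseteq \Theta_D = \phi_D(\Theta)$. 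Hence there exists $\theta_D \in \Theta$ with $\phi_D(\theta_D) = \phi_D(\theta + \gamma)$, and because $\phi_D$ is an $O(2^{-D})$-almost isometry we get $d(\theta_D, \theta + \gamma) < O(2^{-D})$, so $\theta_D \to \theta + \gamma$ as $D \to \infty$. Since $\Theta$ is closed, $\theta + \gamma \in \Theta$, which is what we wanted.

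I do not expect a genuine obstacle here: the argument is just the almost-isometry property combined with the fact, already recorded in \eqref{Theta_D a union of cosets}, that each finite-dimensional projection $\Theta_D$ is a union of cosets of $\Gamma_D$. The only point worth keeping in mind is that the finite coset-representative sets $A_D$ are uncontrolled and may vary with $D$, but this is harmless: we only use that adding an element of $\Gamma_D$ to an element of $\Theta_D$ stays in $\Theta_D$, which holds no matter how the $A_D$ behave. (Together with Lemma~\ref{lemma: Gamma maps onto K_i} this also immediately yields that $\Theta$ surjects onto each $K_i$, which is the form in which the proposition will be applied afterwards.)
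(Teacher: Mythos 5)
Your proof is correct and follows essentially the same route as the paper: reduce to showing $\Theta$ is saturated under translation by $\Gamma$, project to level $D$, use the coset decomposition of $\Theta_D$ from \eqref{Theta_D a union of cosets} together with $\Gamma \subseteq \phi_D^{-1}(\Gamma_D)$ to land back in $\Theta_D$, then apply the almost-isometry and closedness of $\Theta$. The only cosmetic difference is that you phrase the saturation as $\Theta + \Gamma = \Theta$ and take $A = \Theta$, whereas the paper first invokes Lemma~\ref{lemma: Gamma subset of Theta} to include $\Gamma$ itself as one of the cosets; both resolve the $0 \in A$ requirement.
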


\begin{proof} We already know that $\Gamma \subset \Theta$ by Lemma~\ref{lemma: Gamma subset of Theta}. Thus it remains to show that if $\gamma_0 + a \in \Theta$ for some $\gamma_0 \in \Gamma$ and $a \in K_1 \times K_2$ then for every $\gamma \in \Gamma$ we have that $\gamma + a \in \Theta$. Let $D$ be a positive integer, we thus have that $\phi_D(\gamma_0 + a) \in \Theta_D$ and so (\ref{Theta_D a union of cosets}) and the fact that $\Gamma \subset \phi_D^{-1}(\Gamma_D)$ (by definition) implies that $\Gamma_D + \phi_D(a) \subset \Theta_D$. In particular, this means that $\phi_D(\gamma + a) \in \Theta_D$. Thus there exists a $\beta_D \in \Theta$ such that $\phi_D(\gamma+a) = \phi_D(\beta_D)$. But since $\phi_D$ is a $O(2^{-D})$-almost isometry, we have that $d(\gamma + a, \beta_D) < O(2^{-D})$. Thus $\beta_D \to \gamma+a$ and since $\Theta$ is closed we must have $\gamma+a \in \Theta$. \end{proof}

Now let $U_1 \subset K_1$ and $U_2 \subset K_2$ be open sets. Let $$\mathcal{R}_i = \{n \in \bZ ~|~ P(n)\alpha_i \in U_i \}.$$

\begin{prop}\label{prop: infinite dimensional kernel contained in Stabilizer} If $\mathcal{R}_1 = \mathcal{R}_2$ and $\theta, \theta' \in \Theta$ are such that $\pi_1(\theta_1) = \pi_1(\theta_2)$, then $\pi_2(\theta - \theta') \in \operatorname{Stab}_{K_2}(\overline{U_2})$. \end{prop}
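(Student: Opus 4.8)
The plan is to imitate the proof of the Lie-group proposition above, with Proposition~\ref{prop: Gamma union of cosets} playing the role of the coset decomposition of $\Theta$ and Lemma~\ref{lemma: Gamma maps onto K_i} the role of surjectivity onto $K_2$. Write $\theta\in\Gamma+a$ and $\theta'\in\Gamma+a'$ with $a,a'\in A$, so $\theta-\theta'\in\Gamma+(a-a')$, and note $\pi_1(\theta-\theta')=0$ by hypothesis. It suffices to prove that $u_2+\pi_2(\theta-\theta')\in\overline{U_2}$ for every $u_2\in U_2$: this gives $\overline{U_2}+\pi_2(\theta-\theta')\subseteq\overline{U_2}$ by continuity of translation, and $\{c\in K_2 : \overline{U_2}+c\subseteq\overline{U_2}\}$ is a nonempty closed subsemigroup of the compact group $K_2$, hence a subgroup, hence equal to $\operatorname{Stab}_{K_2}(\overline{U_2})$. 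So fix $u_2\in U_2$; by Lemma~\ref{lemma: Gamma maps onto K_i} we have $\pi_2(\Gamma+a')=K_2$, so we may pick $x\in K_1$ with $(x,u_2)\in\Gamma+a'$, and then $(x,u_2)+(\theta-\theta')=(x,\,u_2+\pi_2(\theta-\theta'))\in\Gamma+a\subseteq\Theta$.

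The main obstacle is that, unlike the coset $K_0+a'$ in the Lie case, the coset $\Gamma+a'$ need not be open in $\Theta$, so one cannot conclude from $P(n_j)\alpha\to(x,u_2)$ that $P(n_j)\alpha\in\Gamma+a'$, and hence that $P(n_j)\alpha+(\theta-\theta')$ lies exactly in $\Theta$. To circumvent this I would descend to the Lie quotients: $\Gamma_D$ is the identity component of the compact Lie group $\overline{\bZ\phi_D(\alpha)}$, hence of finite index and open, so each coset $\Gamma_D+b$ with $b\in A_D$ is clopen in $\Theta_D$, and therefore $W_D:=\Theta\cap\phi_D^{-1}(\Gamma_D+\phi_D(a'))$ is a \emph{relatively open} subset of $\Theta$ containing $\Gamma+a'$ (using $\phi_D(\Gamma)\subseteq\Gamma_D$). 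For any $P(n)\alpha\in W_D$ one then computes, one level down, $\phi_D(P(n)\alpha+(\theta-\theta'))=\phi_D(P(n)\alpha)+\phi_D(\theta)-\phi_D(\theta')\in(\Gamma_D+\phi_D(a'))+(\Gamma_D+\phi_D(a))-(\Gamma_D+\phi_D(a'))=\Gamma_D+\phi_D(a)\subseteq\Theta_D=\phi_D(\Theta)$, so that $P(n)\alpha+(\theta-\theta')$ lies within the almost-isometry slack $O(2^{-D})$ of the closed set $\Theta=\overline{P(\bZ)\alpha}$.

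It remains to assemble these ingredients, and the delicate point is to order the choices so that a margin we produce inside $U_1$ beats the slack $O(2^{-D})$. Fix $\epsilon>0$. Since $(x,u_2)$ is a limit of points $P(n_j)\alpha$ with $P(n_j)\alpha_2\to u_2\in U_2$, eventually $n_j\in\mathcal{R}_2=\mathcal{R}_1$ and hence $P(n_j)\alpha_1\in U_1$, so $x\in\overline{U_1}$; since $\pi_1$ restricted to $\Gamma+a'$ is an open map (a translate of the surjective homomorphism $\pi_1|_\Gamma$ of compact groups), we may first choose $(z_0,w_0)\in\Gamma+a'$ with $z_0\in U_1$, $w_0\in U_2$ and $d((z_0,w_0),(x,u_2))<\epsilon$; let $\eta_0>0$ be the distance from $z_0$ to $K_1\setminus U_1$. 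Only now pick $D$ with $O(2^{-D})<\min(\eta_0/2,\epsilon)$. Since $W_D\cap\pi_2^{-1}(U_2)$ is open in $\Theta$ and contains $(z_0,w_0)$, density of the orbit in $\Theta$ gives an integer $n$ with $P(n)\alpha\in W_D\cap\pi_2^{-1}(U_2)$ and $d(P(n)\alpha,(z_0,w_0))<\min(\eta_0/2,\epsilon)$; then $P(n)\alpha_2\in U_2$ forces $n\in\mathcal{R}_1$, so $P(n)\alpha_1\in U_1$ with margin $>\eta_0/2$. By the previous paragraph $P(n)\alpha+(\theta-\theta')$ lies within $O(2^{-D})$ of $\overline{P(\bZ)\alpha}$, so there is an integer $m$ with $d(P(m)\alpha,\,P(n)\alpha+(\theta-\theta'))<\min(\eta_0/2,\epsilon)$; as $\pi_1(\theta-\theta')=0$, the point $P(m)\alpha_1$ is within $\eta_0/2$ of $P(n)\alpha_1$, hence in $U_1$, so $m\in\mathcal{R}_1=\mathcal{R}_2$ and $P(m)\alpha_2\in U_2$; and tracing through the estimates, $P(m)\alpha_2$ lies within $O(\epsilon)$ of $u_2+\pi_2(\theta-\theta')$. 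Letting $\epsilon\to0$ gives $u_2+\pi_2(\theta-\theta')\in\overline{U_2}$, as required.
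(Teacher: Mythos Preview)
Your argument is correct, but it takes a genuinely different route from the paper at the key step. Both proofs start by writing $\theta\in\Gamma+a$, $\theta'\in\Gamma+a'$, lifting $u_2$ to a point of one coset via Lemma~\ref{lemma: Gamma maps onto K_i}, and both ultimately exploit that $\pi_1|_\Gamma:\Gamma\to K_1$ is an open map. The divergence is in how each handles the fact that $\Gamma+a'$ need not be open in $\Theta$. You descend to the Lie quotients: since $\Gamma_D$-cosets are clopen in $\Theta_D$, the set $W_D=\Theta\cap\phi_D^{-1}(\Gamma_D+\phi_D(a'))$ is relatively open in $\Theta$, an orbit point in $W_D$ shifted by $\theta-\theta'$ projects under $\phi_D$ into $\Theta_D$, and the almost-isometry gives an $O(2^{-D})$ error to be beaten by a margin $\eta_0$ manufactured via the openness of $\pi_1|_{\Gamma+a'}$ (perturbing $(x,u_2)$ to $(z_0,w_0)$ with $z_0$ strictly inside $U_1$). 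The paper instead never leaves $K_1\times K_2$: having chosen $P(n_1)\alpha=(x_1,y_1)$ close to $(x,u_2)\in\Gamma+a$, it uses openness of $\pi_1|_\Gamma$ to find a \emph{small} $\gamma=(x_1-x,g_2)\in\Gamma$, so that $(x_1,u_2+g_2)=(x,u_2)+\gamma$ lies \emph{exactly} in $\Gamma+a$, whence $(x_1,u_2+g_2+v)$ lies exactly in $\Gamma+a'\subset\Theta$ and a second orbit point can be chosen directly near it. Your approach re-uses the finite-dimensional machinery already built (Proposition~\ref{prop: polynomial orbit closure in compact Lie group} and the maps $\phi_D$) and makes explicit the order of quantifiers; the paper's ``correct by a small $\gamma\in\Gamma$'' trick is shorter and avoids the second pass through the Lie approximations altogether. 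A minor cosmetic point: your appeal to $\mathcal{R}_1=\mathcal{R}_2$ to get $P(n)\alpha_1\in U_1$ is redundant, since you already arranged $d(P(n)\alpha_1,z_0)<\eta_0/2$; and for the choice of $(z_0,w_0)$ with $w_0\in U_2$ you implicitly need $\epsilon$ small relative to the distance from $u_2$ to $K_2\setminus U_2$, which is harmless since $\epsilon\to 0$.
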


\begin{proof} We have $\theta' - \theta = (0,v)$ for some $v \in K_2$, so we wish to show $v \in  \operatorname{Stab}_{K_2}(\overline{U_2})$. By Lemma~\ref{prop: Gamma union of cosets} we have $a,a' \in A$ such that $\theta \in \Gamma + a$ and $\theta' \in \Gamma + a'$. Now let $u_2 \in U_2$. Since $\pi_2(\Gamma) = K_2$ there exists $x \in K_1$ such that $(x, u_2) \in \Gamma + a$. Now fix $\epsilon > 0$ such that the ball of radius $\epsilon$ centred at $u_2$ is contained in $U_2$. Since $\pi_1 \vert_{\Gamma}:\Gamma \to K_1$ is a surjective homomorphism between compact groups, it is an open map and thus there exists a $\delta >0$ such that if $g_1 \in K_1$ with $d(0, g_1) < \delta$ then there exists a $\gamma \in \Gamma$ with $d(0, \gamma) < \epsilon$ and $\pi_1(\gamma) = g_1$. Now since $(x, u_2) \in \Gamma +a \subset \Theta$ there exists a positive integer $n_1$ such that $d(P(n_1)\alpha, (x,u_2)) < \max\{\delta, \epsilon\}$. Writing $P(n_1)\alpha = (x_1, y_1)$ with $x_1 \in K_1, y_1 \in K_2$ we see that $d(y_1, u_2) < \epsilon$ and thus $y_1 \in U_2$ and thus since $n_1 \in \mathcal{R}_2 = \mathcal{R}_1$ we have $x_1 \in U_1$.  We set $g_1 = x_1 - x$. Notice that $d(0, g_1) < \delta$ and so there exists a $\gamma \in \Gamma$ with $d(0, \gamma) < \epsilon$ such that $\pi_1(\gamma) = g_1$. Writing $\gamma = (g_1, g_2)$ for some $g_2 \in K_2$ we get that $$(x_1, u_2 + g_2) = (x,u_2) + \gamma \in \Gamma + a + \gamma = \Gamma + a.$$ Thus $\theta'' := (x_1, u_2 + g_2 + v) \in \Gamma + a' \subset \Theta$. This means that there exists $n_2 \in \bZ$ such that $P(n_2)\alpha$ is so close to $\theta''$ that $\pi_1(P(n_2)\alpha)$ is so close to $x_1$ that $\pi_1(P(n_2)\alpha) \in U_1$ and $d(\pi_2(P(n_2)\alpha), u_2 + g_2 + v) < \epsilon$. Thus $n_2 \in \mathcal{R}_1 = \mathcal{R}_2$ and so $\pi_2(P(n_2)\alpha) \in U_2$. This means that $$d(u_2 + v, U_2) \leq d(u_2 + v + g_2, U_2) + \epsilon \leq d(u_2 + v +g_2, \pi_2(P(n_2)\alpha)) + \epsilon < 2\epsilon.$$  As $\epsilon > 0$ was arbitrary and independent of $v$, we may take $\epsilon \to 0$ to get that $d(u_2 + v, U_2) = 0$, and thus $u_2 + v \in \overline{U_2}$. Thus $U_2 + v \subset \overline{U_2}$ and by continuity of addition we have that $\overline{U_2} + v \subset \overline{U_2}$. The reverse inclusion holds by swapping the roles of $\theta$ and $\theta'$, thus $v \in \operatorname{Stab}_{K_2}(\overline{U_2})$. \end{proof}

\begin{prop} If $\mathcal{R}_1 = \mathcal{R}_2$ and $\operatorname{Stab}_{K_1}(\overline{U_1})$ and $\operatorname{Stab}_{K_2}(\overline{U_2})$ are trivial, then there exists an isomorphism of topological groups $K_1 \to K_2$ mapping $P(n)\alpha_1$ to $P(n)\alpha_2$ for all $n \in \bZ$.

\end{prop}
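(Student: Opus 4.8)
The plan is to follow the template of the Lie-group case, with the inverse-limit subgroup $\Gamma$ and the coset-union $\Theta = \overline{P(\bZ)\alpha}$ (where $\alpha = (\alpha_1,\alpha_2)$) replacing the roles played there by $K_0$ and $\overline{P(\bZ)\alpha}$. First I would show that $\pi_1$ is injective on $\Theta$: given $\theta,\theta' \in \Theta$ with $\pi_1(\theta) = \pi_1(\theta')$, Proposition~\ref{prop: infinite dimensional kernel contained in Stabilizer} yields $\pi_2(\theta - \theta') \in \operatorname{Stab}_{K_2}(\overline{U_2}) = \{0\}$, hence $\pi_2(\theta) = \pi_2(\theta')$ and so $\theta = \theta'$. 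Running the same argument with the two coordinates interchanged (and using $\operatorname{Stab}_{K_1}(\overline{U_1}) = \{0\}$) shows that $\pi_2$ is injective on $\Theta$ as well.

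Next I would identify $\Theta$ with $\Gamma$. Since $\pi_1$ is a continuous map from the compact set $\Theta$, we have $\pi_1(\Theta) = \overline{P(\bZ)\alpha_1} = K_1$, the last equality by Weyl equidistribution together with the connectedness of $K_1$ and $\overline{\bZ\alpha_1} = K_1$ (here $P$ is non-constant). On the other hand $\Gamma \subset \Theta$ by Lemma~\ref{lemma: Gamma subset of Theta} and $\pi_1(\Gamma) = K_1$ by Lemma~\ref{lemma: Gamma maps onto K_i}. Thus, given any $\theta \in \Theta$, there is $\gamma \in \Gamma \subset \Theta$ with $\pi_1(\gamma) = \pi_1(\theta)$, and injectivity of $\pi_1$ on $\Theta$ forces $\theta = \gamma \in \Gamma$. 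Hence $\Theta = \Gamma$; in particular $\Gamma$ is a closed subgroup of $K_1 \times K_2$ containing $P(n)\alpha = (P(n)\alpha_1, P(n)\alpha_2)$ for every $n \in \bZ$.

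Now the restrictions $\pi_1|_{\Gamma} : \Gamma \to K_1$ and $\pi_2|_{\Gamma} : \Gamma \to K_2$ are continuous group homomorphisms that are surjective (Lemma~\ref{lemma: Gamma maps onto K_i}) and injective (by the first paragraph, since $\Gamma = \Theta$); being continuous bijections from a compact space to a Hausdorff one, they are homeomorphisms, hence isomorphisms of topological groups. Therefore $\psi := \pi_2|_{\Gamma} \circ (\pi_1|_{\Gamma})^{-1} : K_1 \to K_2$ is an isomorphism of topological groups, and since $P(n)\alpha \in \Gamma$ we obtain $\psi(P(n)\alpha_1) = \psi(\pi_1(P(n)\alpha)) = \pi_2(P(n)\alpha) = P(n)\alpha_2$ for all $n \in \bZ$, as required.

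Since the substantive work — the approximation by compact Lie quotients, the coset structure of $\Theta$, and the containment of the relevant kernels in the stabilizer — has already been carried out in the preceding lemmas, what remains is largely bookkeeping. The one step that I expect to require genuine care is the identification $\Theta = \Gamma$: unlike in the finite-dimensional case there is no immediate connectedness argument forcing $\Theta$ to be a single coset, so one instead combines the surjectivity $\pi_i(\Gamma) = K_i$ with the injectivity of $\pi_1$ on the a priori larger set $\Theta$ to collapse the two.
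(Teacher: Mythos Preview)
Your proof is correct and follows essentially the same route as the paper: use Proposition~\ref{prop: infinite dimensional kernel contained in Stabilizer} together with the trivial-stabilizer hypotheses to get injectivity of $\pi_1|_\Theta$ (and by symmetry $\pi_2|_\Theta$), then combine $\Gamma \subset \Theta$ with $\pi_1(\Gamma) = K_1$ and this injectivity to force $\Theta = \Gamma$, so that the $\pi_i|_\Theta$ become topological group isomorphisms whose composite gives the required map. Your appeal to Weyl equidistribution for $\pi_1(\Theta) = K_1$ is harmless but not actually needed, since the argument $\pi_1(\Gamma) = K_1$, $\Gamma \subset \Theta$, $\pi_1|_\Theta$ injective already yields $\Theta = \Gamma$ and hence surjectivity of $\pi_1|_\Theta$ for free.
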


\begin{proof} By Proposition~\ref{prop: infinite dimensional kernel contained in Stabilizer} and the assumption $\operatorname{Stab}_{K_2}(\overline{U_2}) = \{0\}$ we have that the mapping $\pi_1\vert_{\Theta}:\Theta \to K_1$ is injective and thus a homeomorphism. But since $\pi_1(\Gamma) = K_1$ by Lemma~\ref{lemma: Gamma maps onto K_i}, we must have that $\Theta = \Gamma$, so $\pi_1\vert_{\Theta}:\Theta \to K_1$ is also a group homomorpishm.  By symmetry (this time using $\operatorname{Stab}_{K_1}(\overline{U_1}) = \{0\}$) we get that $\pi_2\vert_{\Theta}:\Theta \to K_2$ is also an isomorphism of topological groups. Thus $\pi_2\vert_{\Theta} \circ \left(\pi_1\vert_{\Theta}\right)^{-1}:K_1 \to K_2$ is the desired isomorphism. \end{proof}

\section{Return times sets for nilsystems} \label{sec: Nilsystems}

We now prove Theorem~\ref{thm: nilsystem and kronecker}. As noted, Theorem~\ref{thm: Kronecker factor of minimal system} already defines the map $\phi:X \to K$ and so it remains to provide an inverse $\psi:K \to X$, which is established by the following result.

\begin{prop} Let $X=G/\Gamma$ be a nilmanifold, where $G$ is a nilpotent Lie group and $\Gamma \leq G$ is a cocompact discrete subgroup. Suppose that $\tau \in G$ is such that $(X,T)$ is minimal, where $T:X \to X$ is given by $Tx = \tau x$ for $x \in X$. Suppose that $U \subset X$ is open and satisfies that $\operatorname{Stab}_G(\overline{U}):= \{g \in G ~|~ g\overline{U} = \overline{U}\}$ consists of only those $g \in G$ such that $gx = x$ for all $x \in X$. Now suppose that $(K,+)$ is a compact Lie group and $\alpha \in K$ is such that $\overline{\bZ \alpha} = K$ and $V \subset K$ is open such that $$\{ n \in \bZ ~|~ T^nx_0 \in U\} = \{n \in \bZ ~|~ n\alpha \in V\}$$ for some $x_0 \in X$. Then the pointed system $(X, x_0, T)$ is a factor of the pointed Kronecker system $(K,0 , k \mapsto k+\alpha)$, i.e., there is a continuous map $\psi: K \to X$ with $\psi(0) = x_0$ satisfying $$\psi(\alpha + k) = T\psi(x) \text{ for all } k \in K.$$

\end{prop}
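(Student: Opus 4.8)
The plan is to promote the continuous factor map $\phi:X\to K$ produced by Theorem~\ref{thm: Kronecker factor of minimal system} to a homeomorphism: once $\phi$ is a bijection it is automatically a homeomorphism (as $X$ is compact and $K$ Hausdorff), and then $\psi:=\phi^{-1}$ works, since $\psi(0)=x_0$, $\psi$ is onto, and applying $\phi^{-1}$ to $\phi(T\psi(k))=\phi(\psi(k))+\alpha=k+\alpha$ gives $T\psi(k)=\psi(\alpha+k)$. So everything reduces to showing $\phi$ is injective. To arrange this I would revisit the product system of the proof of Theorem~\ref{thm: Kronecker factor of minimal system}: set $Z=K\times X$, $S(k,x)=(k+\alpha,Tx)$, $z_0=(0,x_0)$, $\Theta=\overline{S^{\bZ}z_0}$, with projections $\pi_K,\pi_X$, the latter onto by minimality of $(X,T)$. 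Since $\phi(T^nx_0)=n\alpha$ by Theorem~\ref{thm: Kronecker factor of minimal system}, each $S^nz_0=(n\alpha,T^nx_0)$ lies on the closed graph of $\phi$, and because $\pi_X(\Theta)=X$ while $\pi_X$ is injective on that graph we get $\Theta=\{(\phi(x),x):x\in X\}$. Hence $\phi$ is injective iff $\pi_K$ is injective on $\Theta$ iff the fibre $F:=\{x\in X:(0,x)\in\Theta\}=\phi^{-1}(0)$ equals $\{x_0\}$; indeed $\phi^{-1}(0)=\{x_0\}$ forces $\phi^{-1}(n\alpha)=T^n\phi^{-1}(0)=\{T^nx_0\}$, and then, since $\overline{\bZ\alpha}=K$, distality of the minimal nilsystem $(X,T)$ (a standard fact) upgrades this to injectivity of $\phi$.

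The point where the nilsystem hypothesis is genuinely used is in recognising $F$ as the orbit of a subgroup. Because $K$ is a compact abelian Lie group, $K\times X$ is again a finite-dimensional nilmanifold (a product of nilmanifolds) and $S$ is translation by a fixed element of the ambient nilpotent Lie group; so by the structure theorem for orbit closures in nilsystems (Leibman, Lesigne, Shah, and others) $\Theta$ is a homogeneous subspace of $K\times X$. Unwinding this description yields a subgroup $L\leq G$ — roughly, the directions along which one can move inside $\Theta$ without changing the $K$-coordinate — with two properties: (i) $F=L\,x_0$; and (ii) for every $h\in L$, the map $(k,x)\mapsto(k,hx)$ sends $\Theta$ into $\Theta$, where $G$ acts on $X=G/\Gamma$ by left translation. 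Concretely, realising $K\times X=\widehat G/\widehat\Gamma$ with $\widehat G=\widetilde K\times G$ ($\widetilde K$ the universal cover of $K$) and $\Theta=M\widehat z_0\widehat\Gamma/\widehat\Gamma$ for a closed subgroup $M\leq\widehat G$, one takes $L$ to be the image in $G$ of $M\cap\bigl(\ker(\widetilde K\to K)\times G\bigr)$; checking that $L$ is a subgroup and verifying (i) and (ii) is a routine manipulation of the homogeneous structure (and no closedness of $L$ is needed).

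Given $L$, the argument now follows the third claim of the proof of Theorem~\ref{thm: Kronecker factor of minimal system}, with $L$ in the role of the fibre subgroup: I would show $L\subseteq\operatorname{Stab}_G(\overline U)$. Fix $\ell\in L$ and $v\in U$; by minimality choose $n_i$ with $T^{n_i}x_0\to v$, so $T^{n_i}x_0\in U$ and hence $n_i\alpha\in V$ for large $i$ (equality of return times); by (ii), $(n_i\alpha,\ell T^{n_i}x_0)\in\Theta=\overline{S^{\bZ}z_0}$, so there exist $m_j$ with $(m_j\alpha,T^{m_j}x_0)\to(n_i\alpha,\ell T^{n_i}x_0)$; since $n_i\alpha\in V$ and $V$ is open, $m_j\alpha\in V$ and hence $T^{m_j}x_0\in U$ for large $j$, giving $\ell T^{n_i}x_0\in\overline U$; letting $i\to\infty$ and using continuity of $y\mapsto\ell y$ yields $\ell v\in\overline U$. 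Thus $\ell U\subseteq\overline U$, hence $\ell\overline U\subseteq\overline U$ by continuity, and applying this to both $\ell$ and $\ell^{-1}$ gives $\ell\overline U=\overline U$. So $L\subseteq\operatorname{Stab}_G(\overline U)$, and by the hypothesis on $\operatorname{Stab}_G(\overline U)$ every $\ell\in L$ fixes every point of $X$; therefore $F=L\,x_0=\{x_0\}$, and by the first paragraph $\phi$ is injective, hence a homeomorphism, and $\psi:=\phi^{-1}$ is the desired map.

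The main obstacle is the structural input of the second paragraph: one must know that the orbit closure $\Theta$ inside the nilmanifold $K\times X$ is itself homogeneous and then read off the subgroup $L$ satisfying (i) and (ii); this is exactly where minimality of the nilsystem and the Lie hypothesis on $K$ are used. (The Lie hypothesis is not a real restriction: by Theorem~\ref{thm: Kronecker factor of minimal system} the group $K$ is a factor of the maximal equicontinuous factor of $X=G/\Gamma$, which, being a quotient of the nilpotent Lie group $G$, is a compact abelian Lie group.) Everything else — distality of minimal nilsystems and the limiting argument of the third paragraph — is standard or already present in the proof of Theorem~\ref{thm: Kronecker factor of minimal system}.
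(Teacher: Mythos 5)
Your proposal is correct and follows essentially the same route as the paper: both pass to the product nilsystem $Z=K\times X$, invoke the orbit-closure structure theorem to write $\Theta=\overline{S^{\bZ}z_0}$ as a homogeneous subspace $Hz_0$ for a closed subgroup $H\leq K\oplus G$, extract the ``vertical'' subgroup $L=\{g\in G : (0,g)\in H\}$, and run the same $\epsilon$-limiting argument with the return-time equality to prove $L\subseteq\operatorname{Stab}_G(\overline U)$. Two of your steps are mild detours that the paper avoids: (a) it is unnecessary to introduce the universal cover $\widetilde K$ --- the paper works directly inside the nilpotent Lie group $K\oplus G$ with cocompact lattice $\{0\}\oplus\Gamma$, so $H$ is read off without any covering-space bookkeeping; and (b) the appeal to distality of minimal nilsystems to upgrade $\phi^{-1}(0)=\{x_0\}$ to injectivity is avoidable, since the homogeneous structure already gives you that every fibre $\phi^{-1}(k)$ is of the form $Lx$, so $L\subseteq\operatorname{Stab}_G(\overline U)$ (which forces $L$ to act trivially on $X$) immediately makes all fibres singletons. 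Your parenthetical observation that $K$ is automatically a compact abelian Lie group in this setting (being a factor of the maximal equicontinuous factor of the nilmanifold) is a correct and useful remark that the paper leaves implicit in passing from Theorem~\ref{thm: nilsystem and kronecker} to the Proposition.
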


\begin{proof} Let $Z = K \times X$ and $z_0 = (0, x_0) \in Z$. Then $Z = (K \oplus G) / (\{0\} \oplus \Gamma)$ is a nilmanifold and thus $(Z,S)$ is a nilsystem where $S:Z \to Z$ is given by $T(k,x) = (k + \alpha, Tx)$.  Now let $\Theta = \overline{S^{\bZ}z_0}$ be the closure of the $S$-orbit of $z_0$. Then (see Theorem 9 in Chapter 11 of \cite{NilpotentErgodic}) we have that $$\Theta = Hz_0$$ for some closed subgroup $H$ of $K \oplus G$. Now let $\pi_K:\Theta \to K$ and $\pi_X:\Theta \to X$ denote the projection maps. We wish to show that $\pi_K$ is injective as it would then be a homeomorphism and our desired factor map would be $\pi_X \circ \pi_K^{-1}:K \to X$. Thus suppose that $\theta_1, \theta_2 \in \Theta$ are such that $\pi_K(\theta_1) = \pi_K(\theta_2)$. Thus we can write $\theta_1 = h_1 z_0$ and $\theta_2 = h_2 z_0$ for some $h_1,h_2 \in H$. For $i=1,2$, we may write $h_i = (k_i, g_i)$ where $k_i \in K$ and $g_i \in G$, thus $\theta_i = (k_i, g_i x_0)$. But $k_1 = k_2$ and so we have $h_0 := h_2h_1^{-1} = (0, g_2 g_1^{-1}) \in H$ and we have the relation $h_0 \theta_1 = \theta_2$. Now let $u \in U$ be arbitrary. By surjectivity of $\pi_X:\Theta \to X$ we may find a $h' \in H$ such that $h'z_0 = (k', u) \in \Theta$ for some $k' \in K$. Thus we may find a sequence of integers $n_1, n_2, \ldots$ such that $S^{n_i}z_0 \to h'z_0$. In particular $T^{n_i}x_0 \to u$ and thus $T^{n_i}x_0 \in U$ for large enough $n_i$. This must mean that $n_i \alpha \in V$. Now $h_0 S^{n_i}z_0 = (n_i\alpha, g_2g_1^{-1}T^{n_i}z_0) \in \Theta$ and so, for each fixed $i$, we may find a sequence of integers $m_1, m_2, \ldots $ such that $\lim_{j \to \infty} S^{m_j}z_0 = (n_i\alpha, g_2g_1^{-1}T^{n_i}x_0)$. In particular this means that $m_j\alpha \to n_i\alpha \in V$ and so for large enough $j$ we have $m_j \alpha \in V$ and so $T^{m_j}x_0 \in U$.  But $T^{m_j}x_0 \to g_2g_1^{-1}T^{n_i}x_0$ and so $g_2g_1^{-1}T^{n_i}x_0 \in \overline{U}$. Finally taking the limit as $i \to \infty$ and using $T^{n_i}x_0 \to u$ we have that $g_2g_1^{-1} u\in \overline{U}$. As $u \in U$ was arbitrary, this shows that $g_2g_1^{-1} U \subset \overline{U}$ and thus $g_2g_1^{-1} \overline{U} \subset \overline{U}$. The reverse inclusion follows from swapping the roles of $\theta_1$ and $\theta_2$, thus $g_2g_1^{-1} \in \operatorname{Stab}_G(\overline{U})$. Thus $g_2g_1^{-1}$ stabilizes every $x \in X$ thus $g_2x_0 = g_2g_1^{-1}g_1x_0 = g_1x_0$, so in fact $\theta_2 = \theta_1$. Thus $\pi_K$ is injective as desired. \end{proof}

\section{Spectral construction for Jordan Measurable sets (Proof of Theorem~\ref{thm: explicit reconstruction})}

Let $U \subset K$ be a Jordan measurable open subset of a compact abelian group $K$. Let $\alpha \in K$ be an element such that $\bZ \alpha$ is dense in $K$. Let $\Lambda$ denote the set of unit complex numbers $\lambda$ such that

$$\frac{1}{N}\sum_{n=1}^N \lambda^n \mathds{1}_{U}(n\alpha)$$ does not converge to $0$ as $N \to \infty$.

\begin{lemma} We have $\Lambda \subset \{ \chi(\alpha) ~|~ \chi \in \widehat{K} \}.$ In fact, if $$\mathds{1}_U = \sum_{\chi} c_{\chi} \chi$$ is the Fourier decomposition in $L^2(K)$, then $\Lambda$ is the set of those $\chi(\alpha)$ for which $c_{\chi} \neq 0$. 

\end{lemma}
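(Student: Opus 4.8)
The plan is to recognize $\tfrac1N\sum_{n=1}^N\lambda^n\mathds{1}_U(n\alpha)$ as an ergodic average over a single equidistributing orbit in a larger compact abelian group and then evaluate the limit by Fourier analysis. Fix $\lambda\in\bU$, set $\widetilde K=K\times\bU$ and $\widetilde\alpha=(\alpha,\lambda)\in\widetilde K$, and let $L=\overline{\bZ\widetilde\alpha}$ be the closed subgroup it generates, with normalized Haar measure $m_L$. Writing $h(x,z):=z\,\mathds{1}_U(x)$, we have $\lambda^n\mathds{1}_U(n\alpha)=h(n\widetilde\alpha)$, so the average is $\tfrac1N\sum_{n=1}^N h(n\widetilde\alpha)$. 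Since $\bZ\widetilde\alpha$ is dense in $L$, Weyl equidistribution gives $\tfrac1N\sum_{n=1}^N\delta_{n\widetilde\alpha}\to m_L$ weakly, and the goal of the first half of the argument is to upgrade this to $\tfrac1N\sum_{n=1}^N h(n\widetilde\alpha)\to\int_L h\,dm_L$, so that $\lambda\in\Lambda$ precisely when $\int_L h\,dm_L\neq 0$.

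The step I expect to be the main technical obstacle is justifying this last convergence, since $h|_L$ is only Riemann integrable, not continuous; this is exactly where Jordan measurability of $U$ enters. Let $p\colon L\to K$ and $q\colon L\to\bU$ be the coordinate projections. Then $p$ is surjective — indeed $p(L)$ is compact, hence closed, and contains the dense subgroup $\bZ\alpha$ — so $p_*m_L=m_K$ by uniqueness of Haar measure. Since $U$ is open, $p^{-1}(U)$ is open in $L$, and its topological boundary is contained in $p^{-1}(\partial U)$, which is $m_L$-null because $m_L(p^{-1}(\partial U))=m_K(\partial U)=0$. Hence $\mathds{1}_{p^{-1}(U)}$, and therefore $h|_L=q\cdot\mathds{1}_{p^{-1}(U)}$, is bounded and $m_L$-almost everywhere continuous; the portmanteau theorem (applied to real and imaginary parts) then yields $\tfrac1N\sum_{n=1}^N h(n\widetilde\alpha)\to\int_L h\,dm_L$.

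It remains to compute $\int_L h\,dm_L$ and read off the conclusion. Expanding $\mathds{1}_U=\sum_{\chi\in\widehat K}c_\chi\chi$ in $L^2(K)$ and pulling back along $p$ gives $\mathds{1}_U\circ p=\sum_\chi c_\chi(\chi\circ p)$ in $L^2(L)$, where each $\chi\circ p$ lies in $\widehat L$, as does $q$. Since a product of characters is a character and the integral over $L$ of a character is $1$ when it is trivial and $0$ otherwise, $\int_L h\,dm_L$ equals the sum of $c_\chi$ over those $\chi\in\widehat K$ for which $q\cdot(\chi\circ p)$ is trivial on $L$; testing triviality on the dense subgroup $\bZ\widetilde\alpha$, this is the single condition $\lambda\,\chi(\alpha)=1$, i.e.\ $\chi(\alpha)=\lambda^{-1}$. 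Because $\bZ\alpha$ is dense in $K$, the homomorphism $\chi\mapsto\chi(\alpha)$ is injective on $\widehat K$, so this sum has at most one term: the limit of the average equals $c_\chi$ if there is a (necessarily unique) $\chi\in\widehat K$ with $\chi(\alpha)=\lambda^{-1}$, and equals $0$ otherwise. Hence $\lambda\in\Lambda$ forces $\lambda^{-1}$, and therefore $\lambda$, to lie in the subgroup $\{\chi(\alpha):\chi\in\widehat K\}$ of $\bU$, which is the first assertion; moreover $\lambda\in\Lambda$ iff some $\chi$ satisfies $\chi(\alpha)=\lambda^{-1}$ and $c_\chi\neq 0$. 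Since $\mathds{1}_U$ is real-valued, $c_{\overline\chi}=\overline{c_\chi}$ and $\overline\chi(\alpha)=\overline{\chi(\alpha)}$, so replacing $\chi$ by $\overline\chi$ recasts this as: some $\chi$ satisfies $\chi(\alpha)=\lambda$ and $c_\chi\neq 0$. Therefore $\Lambda=\{\chi(\alpha):\chi\in\widehat K,\ c_\chi\neq 0\}$. The remaining points — surjectivity of $p$ and $p_*m_L=m_K$, injectivity of $\chi\mapsto\chi(\alpha)$, and the real-coefficient bookkeeping — are routine.
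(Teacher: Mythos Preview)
Your proof is correct. Both arguments hinge on the same two ingredients---Jordan measurability of $U$ to pass from continuous test functions to $\mathds{1}_U$, and the injectivity of $\chi\mapsto\chi(\alpha)$---but you package them differently from the paper. The paper splits into two cases: when $\lambda\notin\{\chi(\alpha)\}$ it shows the average vanishes by first handling characters (geometric series), then continuous functions (Stone--Weierstrass), then $\mathds{1}_U$ via an explicit continuous majorant $f_\epsilon^+\searrow\mathds{1}_{\overline U}$; when $\lambda=\overline{\chi(\alpha)}$ it observes that $\overline\chi\,\mathds{1}_U$ is Riemann integrable on $K$ and applies equidistribution of $n\alpha$ directly to get the limit $c_\chi$. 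Your route is more unified: by passing to the orbit closure $L\subset K\times\bU$ you treat all $\lambda$ at once, with the case distinction appearing only at the very end in whether the character equation $\chi(\alpha)=\lambda^{-1}$ has a solution. The cost is a bit of overhead (surjectivity of $p$, $p_*m_L=m_K$, Riemann integrability on $L$ rather than $K$), but the payoff is that the Jordan-measurability step and the Fourier computation each occur exactly once. The conjugation bookkeeping at the end, absent in the paper since it lands on $\overline{\chi(\alpha)}$ directly, is handled correctly.
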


\begin{proof} Let $\bU$ be the unit complex numbers. Suppose that $\lambda \in \bU$ but $\lambda \notin  \{ \chi(\alpha) ~|~ \chi \in \widehat{K} \}$. Note that for any character $\chi$ on $K$ we have that $$\frac{1}{N}\sum_{n=1}^N \lambda^n \chi(n\alpha) = \frac{1}{N}\sum_{n=1}^N(\lambda\chi(\alpha))^n \to 0.$$ As any continuous function $f:K \to \bC$ can be uniformly approximated by a linear combination of characters, we have that $$\frac{1}{N}\sum_{n=1}^N \lambda^n f(n\alpha) \to 0.$$

Now observe that there exist continuous functions $f_{\epsilon}^+: K \to [0,1]$ such that $$f_{\epsilon}^+ \searrow \mathds{1}_{\overline{U}}, \quad \text{ as } \epsilon \to 0.$$ In particular we choose these functions so that $$m_K(\operatorname{supp}f_{\epsilon}^+ \setminus U) = m_K(\operatorname{supp}f_{\epsilon}^+ \setminus \overline{U}) < \epsilon$$ where $m_K$ denotes the Haar measure on $K$ and in the first equality we used that $\overline{U}$ and $U$ are $m_K$ almost the same (as $U$ is Jordan measurable). Now as $\operatorname{supp}f_{\epsilon}^+ \setminus U$ is a closed set its indicator function can be written as a pointwise decreasing limit of continuous functions, thus we have that $$\limsup_{N \to \infty} \frac{1}{N} |\{ n \in [1,N] ~|~ n\alpha \in \operatorname{supp}f_{\epsilon}^+ \setminus U \}| \leq m_K(\operatorname{supp}f_{\epsilon}^+ \setminus U) < \epsilon.$$ This means that $$\limsup_{N \to \infty} |\frac{1}{N} \sum_{n=1}^N \lambda^n f_{\epsilon}^+ (n\alpha) - \frac{1}{N} \sum_{n=1}^N \lambda^n \mathds{1}_{U}(n\alpha)| < \epsilon.$$ Thus $$\frac{1}{N} \sum_{n=1}^N \lambda^n \mathds{1}_{U}(n\alpha) \to 0$$ and hence we have shown the first claim that $$\Lambda \subset \{ \chi(\alpha) ~|~ \chi \in \widehat{K} \}.$$

Now suppose that $\lambda \in \{ \chi(\alpha) ~|~ \chi \in \widehat{K} \}$. Thus $\lambda = \overline{\chi(\alpha)}$ for some unique (by density of $\bZ\alpha$) character $\chi$ on $K$. But $\overline{\chi} \mathds{1}_U$ is Riemann-integrable thus we have that $$\frac{1}{N} \sum_{n=1}^N \lambda^n\mathds{1}_U(n\alpha)  = \frac{1}{N} \sum_{n=1}^N \overline{\chi}\mathds{1}_U(n\alpha) \to \int \overline{\chi}\mathds{1}_U dm_K = c_{\chi}.$$ 
\end{proof}

Now let $E = \{ \chi(\alpha) ~|~ \chi \in \widehat{K} \}$. Note that the mapping $\widehat{K} \to E$ mapping $\chi$ to $\chi(\alpha)$ is injective (by density of $\bZ \alpha$) and thus bijective. Consequently, let $\Gamma \subset \widehat{K}$ be the set corresponding to $\Lambda$, i.e., $\Lambda = \{ \chi(\alpha) ~|~ \chi \in \Gamma\}$. 

Now observe that the mapping $\iota:K \to \bU ^{\widehat{K}}$ given by $\iota(k) = \left( \chi(k) \right)_{\chi \in \widehat{K}}$ is injective and continuous, thus a homeomorphism onto its image. Now let $$\pi: K \to \bU^{\Gamma}$$ be the projection given by $$k \mapsto \left( \chi(k) \right)_{\chi \in \Gamma}.$$

\begin{prop} \label{prop: kernel inside stabilizer, spectral} $\operatorname{ker}(\pi) \subset \operatorname{Stab}_K(\overline{U})$.

\end{prop}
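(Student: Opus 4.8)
The plan is to show directly that every $k \in \ker(\pi)$ fixes $\overline{U}$ setwise. Fix such a $k$, so $\chi(k) = 1$ for all $\chi \in \Gamma$. The key point is that $\Gamma$ is exactly the set of characters appearing with a nonzero Fourier coefficient in $\mathds{1}_U$, by the Lemma just proved; hence if $\mathds{1}_U = \sum_{\chi} c_\chi \chi$ in $L^2(K)$, then the translate $\mathds{1}_{U+k}(x) = \mathds{1}_U(x-k) = \sum_\chi c_\chi \overline{\chi(k)} \chi(x)$ has the same Fourier coefficients as $\mathds{1}_U$, because $c_\chi \ne 0$ forces $\chi \in \Gamma$ and thus $\overline{\chi(k)} = 1$. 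Therefore $\mathds{1}_{U+k} = \mathds{1}_U$ in $L^2(K)$, i.e. $m_K\bigl((U+k)\triangle U\bigr) = 0$.

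From this almost-everywhere equality I would upgrade to the genuine set equality $\overline{U+k} = \overline{U}$, which is where Jordan measurability is used. First I would note $m_K\bigl(\overline{U+k}\triangle\overline{U}\bigr) = 0$, since $m_K(\overline{U}) = m_K(U)$ and likewise for the translate $U+k$ (translation preserves $m_K$ and Jordan measurability). Then I would argue that a closed set whose symmetric difference with another closed set has Haar measure zero, when both are closures of open sets, must actually coincide: if $x \in \overline{U+k} \setminus \overline{U}$, then by openness of the complement of $\overline{U}$ there is an open neighbourhood $W \ni x$ with $W \cap \overline{U} = \emptyset$; but $x \in \overline{U+k}$ means $W \cap (U+k) \ne \emptyset$, and $W \cap (U+k)$ is a nonempty open set of positive Haar measure disjoint from $\overline{U} \supset U$, contradicting $m_K\bigl((U+k)\setminus U\bigr) = 0$. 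The symmetric argument (using that $-k$ also lies in $\ker \pi$, since $\ker\pi$ is a subgroup) gives the reverse inclusion, so $\overline{U+k} = \overline{U}$, i.e. $k \in \operatorname{Stab}_K(\overline{U})$.

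The main obstacle I anticipate is precisely this passage from the measure-theoretic statement $m_K((U+k)\triangle U) = 0$ to the topological statement $\overline{U+k} = \overline{U}$; without Jordan measurability one could have $U$ and $U+k$ differing on a measure-zero but topologically substantial set (e.g. a dense set of removed points), which would break the argument — and indeed the paper remarks after Theorem~\ref{thm: explicit reconstruction} that the reconstruction can fail for non-Jordan-measurable $U$. Everything else is routine Fourier analysis on compact abelian groups together with the Lemma already established. Note that one does not even need $\pi$ to be injective here; this Proposition together with the hypothesis $\operatorname{Stab}_K(\overline{U}) = \{0\}$ will yield injectivity of $\pi$ afterwards.
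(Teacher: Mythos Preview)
Your proof is correct and follows essentially the same route as the paper's: use the Lemma to identify $\Gamma$ with the Fourier support of $\mathds{1}_U$, deduce that $\mathds{1}_U$ (equivalently $\mathds{1}_{\overline{U}}$, by Jordan measurability) is invariant under translation by $k$ almost everywhere, and then use that a nonempty open subset of $K$ has positive Haar measure to upgrade this to the setwise equality $\overline{U}+k=\overline{U}$. The paper's phrasing of the last step is slightly more direct --- it simply notes that $(U-k)\setminus\overline{U}$ is an open set of measure zero and hence empty --- but your contradiction argument with the neighbourhood $W$ amounts to exactly the same observation.
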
 

\begin{proof} Suppose that $k \in \operatorname{ker} \pi$. Thus $\chi(k) = 1$ for all $\chi \in \Gamma$. But since $$\mathds{1}_{\overline{U}} = \sum_{\chi \in \Gamma} c_{\chi} \chi$$ we have that $\mathds{1}_{\overline{U}}(x + k ) = \mathds{1}_{\overline{U}}(x)$ for almost all $x \in K$. As $U$ and $\overline{U}$ are $m_K$ almost equal, this means that the open set $(U - k) \setminus \overline{U}$ has zero measure and is thus empty. This means that $u - k \in \overline{U}$ for all $u \in U$ and $k \in \operatorname{ker}(\pi)$. Thus $\operatorname{ker(\pi)} \subset \operatorname{Stab}_K(\overline{U})$. \end{proof}

\begin{proof}[Proof of Theorem~\ref{thm: explicit reconstruction}]  The assumption that $\operatorname{Stab}_K(\overline{U})$ is trivial together with Proposition~\ref{prop: kernel inside stabilizer, spectral} implies that $\pi$ is injective and thus an isomorphism onto its image. But the image of $\pi$ is the closed subgroup of $\bU^{\Gamma}$ generated by $$\pi(\alpha) =  \left( \chi(\alpha) \right)_{\chi \in \Gamma}.$$ Finally, by applying the natural isomorphism $\bU^{\Gamma} \cong \bU^{\Lambda}$ (i.e., the one induced by the bijection $\chi \mapsto \chi(\alpha)$ from $\Gamma$ to $\Lambda$) we get that $K$ is isomorphic to the closed subgroup of $\bU^{\Lambda}$ generated by $ \left( \lambda \right)_{\lambda \in \Lambda}. $ \end{proof}

\section{Equivalence of Question~\ref{question: G sets} and Question~\ref{question: stable implies simple}}
\label{section: equivalence of questions}

We now show that Question~\ref{question: G sets} and Question~\ref{question: stable implies simple} are equivalent. Recall that a \textit{block system} of a group action $G\curvearrowright X$ is a partition $\mathcal{B} \subset 2^X$ of $X$ (collection of non-empty disjoint sets whose union is $X$) that is $G$-invariant (i.e., if $g \in G$ and $B \in \mathcal{B}$ then $gB \in \mathcal{B}$). Let us define the \textit{block system generated by $U \subset X$} (for this action) to be the smallest (with respect to inclusion) block system such that $U$ is a union of elements of $\mathcal{B}$. More concretely, the the block system generated by $U$ consists of those minimal non-empty sets that can be written as intersections of elements in $\{ gU ~|~ g \in G\} \cup \{ X \setminus gU ~|~ g \in G\}$. 

\begin{prop}\label{prop: return times to simple sets} Let $G$ be a group acting transitively on a set $X_1$ and also acting transitively on a set $X_2$ and suppose that $U_1 \subset X_1$ and $U_2 \subset X_2$ are simple for the respective actions. Suppose that $x_1 \in X_1$ and $x_2 \in X_2$ are such that $$\{g \in G ~|~ gx_1 \in U_1\} = \{g \in G ~|~ gx_2 \in U_2\}.$$ Then there is an isomorphism $\phi:X_1 \to X_2$ of $G$-sets mapping $x_1$ to $x_2$, i.e., $\phi(gx) = g\phi(x)$ for all $g \in G$ and $x \in X_1$.

\end{prop}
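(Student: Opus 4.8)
The plan is to translate the statement into a question about point stabilizers in $G$ and solve that. Write $H_i = \{g \in G \mid gx_i = x_i\}$ for the stabilizer of $x_i$; by transitivity the map $gH_i \mapsto gx_i$ identifies the pointed $G$-set $(X_i, x_i)$ with $(G/H_i, eH_i)$. Put $R = \{g \in G \mid gx_1 \in U_1\}$; by hypothesis $R$ also equals $\{g \in G \mid gx_2 \in U_2\}$, and under the identifications above $R$ is simultaneously the subset of $G$ lying over $U_1 \subseteq G/H_1$ and the subset lying over $U_2 \subseteq G/H_2$. In particular $R$ is right-$H_i$-invariant, i.e. $RH_i = R$, for $i=1,2$. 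With this setup it suffices to show $H_1 = H_2$, for then the canonical bijection $G/H_1 \to G/H_2$ is a $G$-set isomorphism carrying $x_1$ to $x_2$ (and, as it happens, $U_1$ to $U_2$).

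The main step is to unpack the hypothesis that $U_i$ is \emph{simple}. By the definition and the explicit description of the block system generated by $U_i$, simplicity of $U_i$ is equivalent to that block system consisting only of singletons, and hence to the assertion that the translates $\{gU_i \mid g\in G\}$ separate the points of $X_i$. Unwinding this through $G/H_i$: the points $ax_i$ and $bx_i$ fail to be separated by every translate exactly when $ca \in R \Leftrightarrow cb \in R$ for all $c \in G$, i.e. when $Ra^{-1}=Rb^{-1}$, i.e. when $Rt=R$ for $t := b^{-1}a$; and $ax_i = bx_i$ holds precisely when $t \in H_i$. Thus simplicity of $U_i$ says exactly that $Rt=R$ forces $t\in H_i$. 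Since $RH_i = R$ gives the reverse inclusion $H_i \subseteq \{t\in G \mid Rt=R\}$, we obtain $\{t\in G\mid Rt=R\}=H_i$ for $i=1,2$, hence $H_1=H_2$, completing the argument.

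The step I expect to take the most care is precisely this translation of ``simple'' into the condition $\{t\mid Rt=R\}\subseteq H_i$: one should check that the block system generated by $U_i$ really is a partition of $X_i$ even when $X_i$ is infinite (each $x\in X_i$ lies in the minimal set $\bigcap\{S : x\in S\}$, the intersection running over those translates $gU_i$ and complements $X_i\setminus gU_i$ that contain $x$), and the coset bookkeeping must be kept straight. The degenerate cases $R=\emptyset$ and $R=G$, where $U_i\in\{\emptyset,X_i\}$, are worth a separate remark — there simplicity forces $|X_i|=1$ and the claim is immediate — though in fact the identity $\{t\mid Rt=R\}=H_i$ holds verbatim in those cases too, so no genuine case split is needed. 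Everything else is bookkeeping.
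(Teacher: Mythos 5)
Your proof is correct and takes essentially the same approach as the paper: both use simplicity to recover $\operatorname{Stab}(x_i)$ intrinsically from $R$ and conclude $\operatorname{Stab}(x_1)=\operatorname{Stab}(x_2)$. You identify the stabilizer directly as $\{t \in G \mid Rt = R\}$, while the paper packages the same computation as the block containing the identity in the block system on $G$ generated by $R$ under left translation.
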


\begin{proof} Let $\mathcal{B}$ denote the block system generated by $U_1$. Thus there is a well defined map $\pi: X_1 \to \mathcal{B}$ where for $x \in X_1$ we define $\pi(x)$ to be the unique element of $\mathcal{B}$ containing $x \in X_1$. Note that $\pi$ is a morphism of $G$-actions, i.e., $g \pi(x) = \pi(gx)$ for all $g \in G$ and $x \in X_1$. Note that $$U_1 = \pi^{-1}\left( \bigcup_{\{B \in \mathcal{B} ~|~B \subset U_1\}} B \right)$$ since by definition $U_1$ is a union of elements of $\mathcal{B}$. Thus since $U_1$ is simple we must have that $\pi$ is an isomorphism. In particular this means $\{x\} \in \mathcal{B}$ for all $x \in X_1$. Now let $$R(V) = \{g \in G ~|~ gx_1 \in V\}$$ for any $V \subset X_1$. Notice the properties $R(gV)=gR(V)$ and $R(\bigcap_{i \in \mathcal{I}} V_i)= \bigcap_{i \in \mathcal{I}} R(V_i)$ and $R(X_1 \setminus V) = G \setminus R(V)$. Now let $\mathcal{B}_G$ denote the block system generated by $R(U_1)$ for the action of $G \curvearrowright G$ by left multiplication. By the aforementioned properties we have that $\mathcal{B}_G$ consists of those sets of the form $R(B)$ for $B \in \mathcal{B}$ and so we have that $\mathcal{B}_G$ consists of exactly the sets of the form $R(\{x\})$ for $x \in X_1$. Note that $R(\{x_1\}) = \{g \in G ~|~ gx_1 = x_1\} = \operatorname{Stab}(x_1)$ and any other $R(\{x\})$ must be a coset of $\operatorname{Stab}(x_1)$ (by transitivity $x = gx_1$ for some $g \in G$ and so $R(\{x\}) = gR(\{x_1\})$). So in fact $\operatorname{Stab}(x_1)$ may be desribed as member of $\mathcal{B}_G$ that contains $1 \in G$. Since $X_1 \cong G/\operatorname{Stab}(x_1)$, this means that we have an isomorphism of $G$-actions $X_1 \to \mathcal{B}_G$ that maps $x_1$ to the unique element of $\mathcal{B}_G$ containing $1 \in G$ (it maps $x$ to $R(\{x\})$). Notice that $\mathcal{B}_G$ is the same if we replace $X_1,x_1,U_1$ with $X_2, x_2, U_2$ respectively by the assumption that $$\{g \in G ~|~ gx_1 \in U_1\} = \{g \in G ~|~ gx_2 \in U_2\}.$$ Thus indeed we have an isomorphism $X_1 \cong X_2$ of $G$-actions (they are both isomorphic to $G$ acting on $\mathcal{B}_G$). \end{proof}

This demonstrates the equivalence of the two questions as follows. If the answer to Question~\ref{question: stable implies simple} is affirmative, then the two sets in Question~\ref{question: G sets} are simple and thus the Proposition~\ref{prop: return times to simple sets} provides an affirmative answer. Conversely, if $U \subset X$ is a subset in a transitive $G$-set $X$ with a trivial setwise stabilizer, then we have a factor $\pi:X \to \mathcal{B}$, where $\mathcal{B}$ is the block system generated by $U$, mapping $x \in X$ to the unique element of $\mathcal{B}$ containing $x$ (as constructed in the proof of the Propostion~\ref{prop: return times to simple sets}). But $\pi(U)$ also has a trivial setwise stabilizer and $\{g \in G ~|~ gx_0 \in U\} = \{g \in G ~|~ g\pi(x_0) \in \pi(U)\}$ so if the answer to Question~\ref{question: G sets} is affirmative, then $\pi$ must be an isomorphism and so $\mathcal{B}$ consists of singletons.

\end{document}